\documentclass[12pt,reqno]{amsart}


\usepackage{amsmath,amsfonts,amsthm,amssymb,amsxtra,bbm,multirow}


\usepackage{palatino}

\usepackage{graphicx,enumerate}
\usepackage{hyperref}
\usepackage{pdfsync}
\usepackage{color}
\usepackage[T1]{fontenc} 
\usepackage{lmodern}

\setlength{\voffset}{-.7truein}
\setlength{\textheight}{8.8truein}
\setlength{\textwidth}{6.05truein}
\setlength{\hoffset}{-.7truein}


\newtheorem{theorem}{Theorem}
\newtheorem{lemma}[theorem]{Lemma}
\newtheorem{corollary}[theorem]{Corollary}

\theoremstyle{remark}




\renewcommand{\epsilon}{\varepsilon}

\newcommand{\eps}{\varepsilon}
\newcommand{\E}{\mathcal{E}}

\renewcommand{\P}{\mathcal{P}}
\newcommand{\R}{\mathbb{R}}

\DeclareMathOperator{\diam}{diam}
\DeclareMathOperator{\dist}{dist}

\DeclareMathOperator{\supp}{supp}


\title[Strong Attraction Limit of Nonlocal Interaction Energies]{On the Strong Attraction Limit for a Class of Nonlocal Interaction Energies}

\author{Almut Burchard}
\address{Dept. of Mathematics, University of Toronto, Toronto, ON, Canada}
\email{almut@math.toronto.edu}

\author{Rustum Choksi}
\address{Dept. of Mathematics and Statistics,
		McGill University, Montr\'{e}al, QC, Canada}
\email{rustum.choksi@mcgill.ca}

\author{Elias Hess-Childs}
\address{Dept. of Mathematics and Statistics, McGill 
University, Montr\'{e}al, QC, Canada\newline
\indent
{Current address:
Courant Institute of Mathematical Sciences
New York, NY}}
\email{elias.hess-childs@courant.nyu.edu}

\date{February 25, 2020}                                        
\subjclass{}
\keywords{nonlocal interaction energies, strong attraction 
limit, probability measures, $\Gamma$-limit, 
body of constant width, capacity,symmetry-breaking}

\begin{document}
\maketitle

\begin{abstract} This note concerns the problem of
minimizing a certain family of non-local energy functionals 
over measures on $\R^n$, subject to a mass constraint,
in a strong attraction limit.
In these problems, the total energy is an 
integral over pair interactions
of attractive-repulsive type. The interaction 
kernel is a sum of competing power law 
potentials with attractive powers 
$\alpha \in (0, \infty)$ and repulsive powers 
associated with Riesz potentials.   
The strong attraction limit
$\alpha \to \infty$ is addressed via Gamma-convergence, 
and 
minimizers of the limit are characterized in terms of 
an isodiametric capacity problem.
We also provide evidence for symmetry-breaking
in high dimensions.
\end{abstract}

\maketitle

\section{Introduction and Statement of the Results} 

We consider mass-constrained variational problems of the form
\begin{equation}\label{prob-P}
\begin{cases} {\rm Minimize} \quad  
{\displaystyle
\E_{\alpha,\lambda}(\mu)\, :=\, \int_{\R^n}\int_{\R^n} 
K_{\alpha,\lambda}{(x-y)} \, d\mu(x)d\mu(y) }\\[.5em]
{\rm over} \,\, \P:=\{\mu\ \text{Borel measure on}\ 
\R^n:\mu(\R^n)=1\} \,,
\end{cases}
\end{equation} 
where the interaction kernel  is given by
\begin{equation}\label{kernel}
K_{\alpha,\lambda} (x-y) \, :=\, {|x-y|^\alpha} \, 
+\,  {|x-y|^{-\lambda}} \qquad 
\text{\rm with $\alpha \in (0, \infty)$ and $\lambda \in (0,n)$.} 
\end{equation}
These kernels are strongly repulsive at short range, 
with the repulsion controlled by the exponent $\lambda$, 
and attractive at long range, with the attraction 
controlled by $\alpha$, see Fig.~\ref{fig-ker}.  
Since the kernels are lower semicontinuous, locally
integrable, and grow at infinity,
by the results of~\cite{SST, CCP}, Problem~\eqref{prob-P}
has a  global minimizer.
\begin{figure}[h]
  \includegraphics[width=0.3\textwidth]{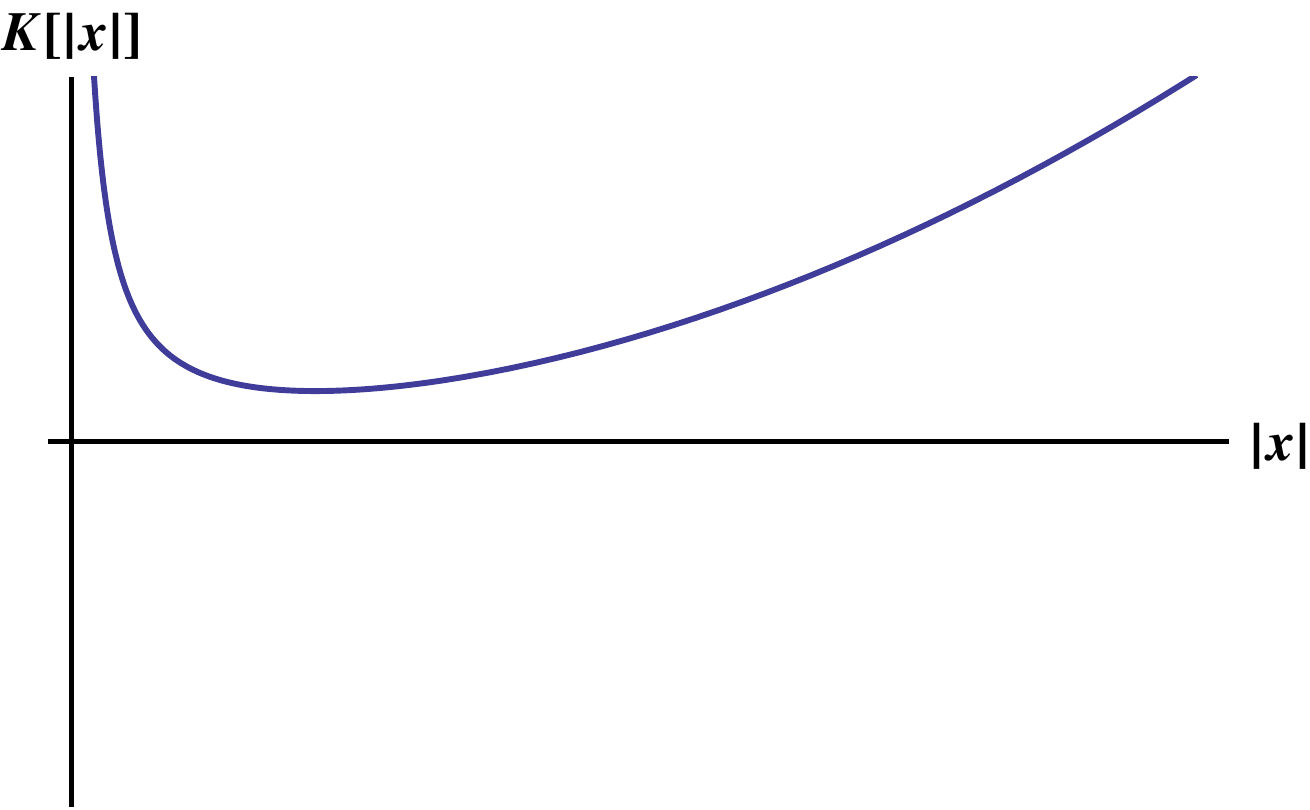}
  	\caption{\small Shape of the interaction kernel 
$K_{\alpha,\lambda} (|\cdot|)$ for 
{$\alpha\in (0,\infty)$ and
$\lambda\in(0,n)$.}} 
\label{fig-ker}
\end{figure}

Variational problems of the form~\eqref{prob-P} arise 
in connection with a class of models for aggregation 
and self-assembly that
have recently received much attention 
(see for example,~\cite{BT} and the references therein). 
In those models, a population density $\rho$ evolves
according to the equation
\[	
\rho_t + \nabla \cdot (\rho \mathbf{v}) = 0\,, \qquad 
\mathbf{v} = - \nabla K_{\alpha,\lambda} \ast \rho\,,
\]
which is the gradient flow of the 
energy $\E_{\alpha,\lambda}(\mu)$ on absolutely continuous
measures $\mu=\rho\,dx$
in the 2-Wasserstein metric (cf.~\cite{CaDiFiLaSl}).
Energy minimizers represent
stable steady-states of the aggregation
process.

Here, we study the minimization problem~\eqref{prob-P}
in the strong attraction regime where $\alpha \to \infty$.
In this limit, finite energy alone restricts the support of 
a measure to have diameter no larger than one.

In Fig.~\ref{fig-PS}, we present a few particle simulations 
in dimension $n=2$ which suggest that as $\alpha$ increases, 
minimizers concentrate on the boundary of the ball of diameter 1
for some values of $\lambda$; but spread
out (non-uniformly) over the ball for larger values of $\lambda$.
A broader range of behaviour is expected  for other parameters and in higher
dimensions (see for example Fig.~\ref{fig-3D}).

\begin{figure}[h]
 \includegraphics[width=0.26\textwidth]{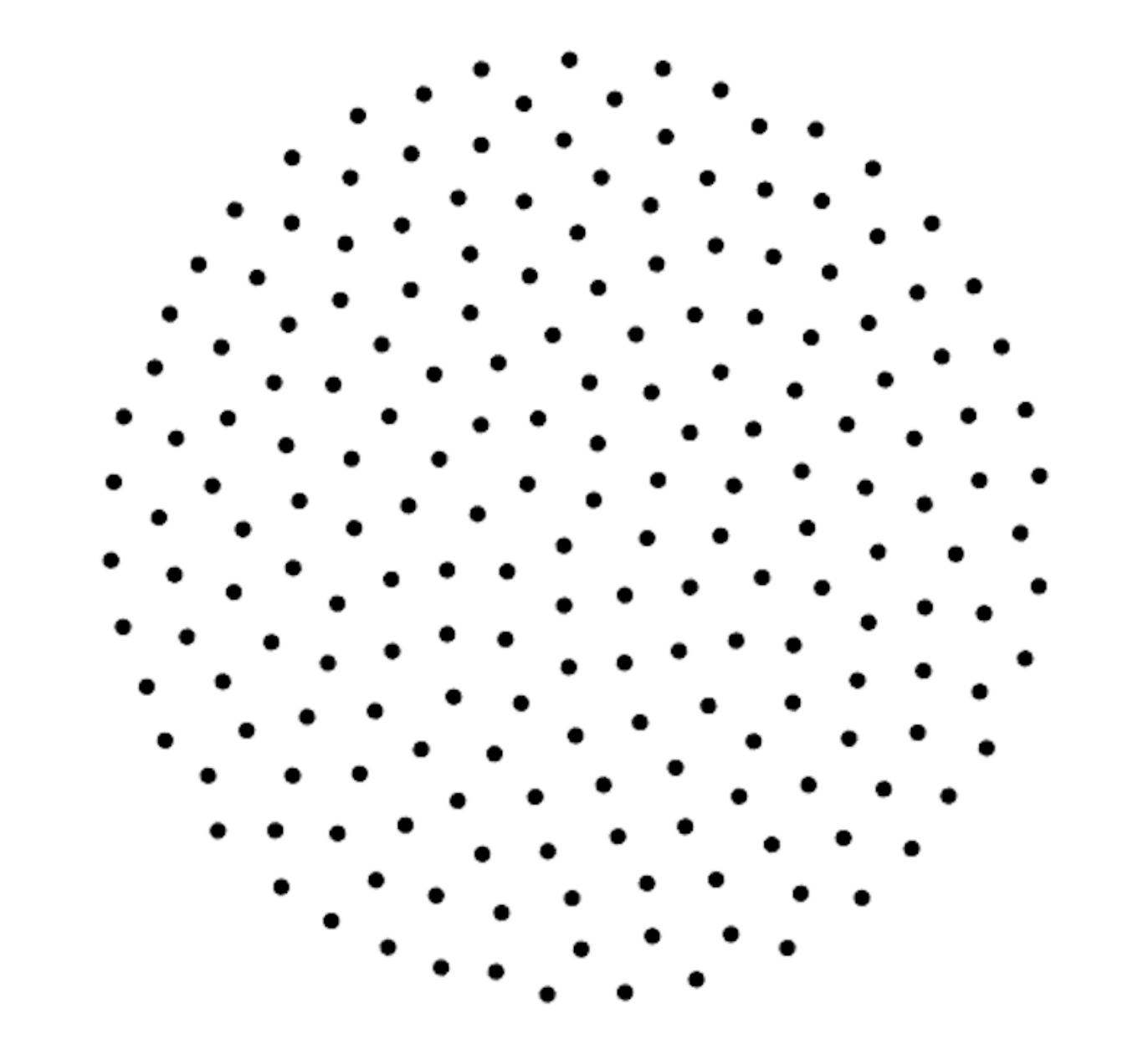} \qquad   \includegraphics[width=0.25\textwidth]{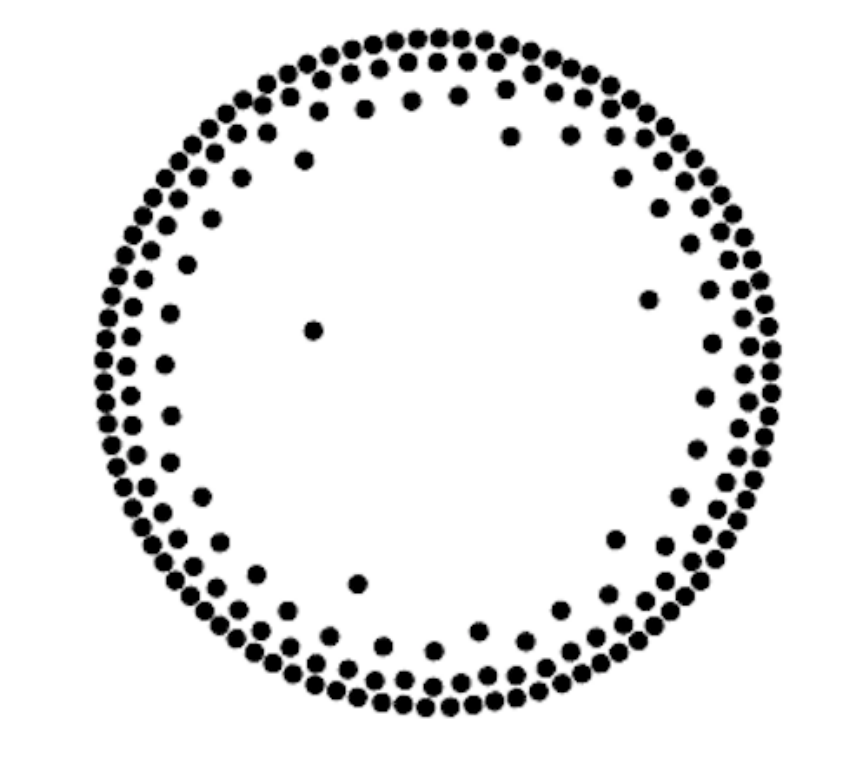}\qquad 
    \includegraphics[width=0.27\textwidth]{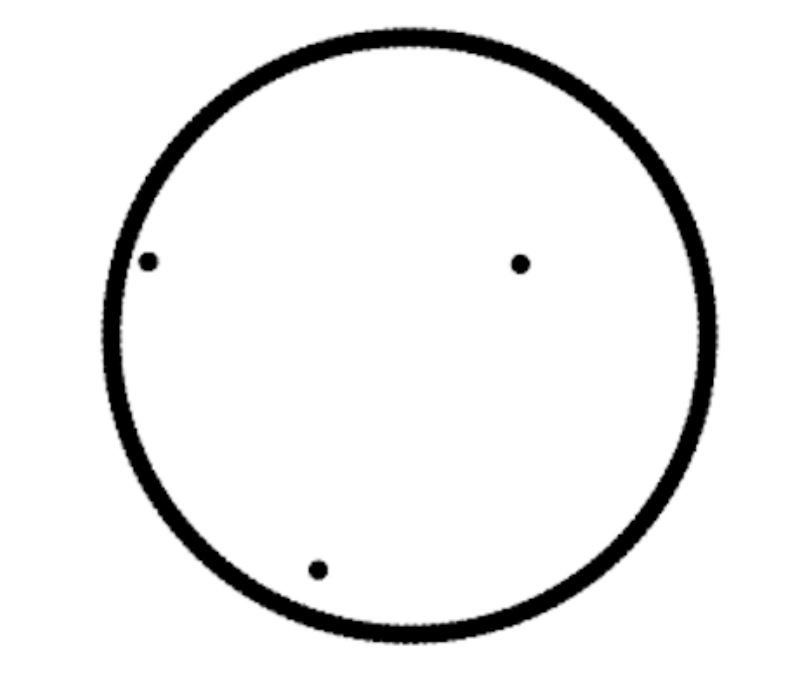} 
      \qquad \quad \qquad \quad \qquad \quad\qquad \quad
  \includegraphics[width=0.25\textwidth]{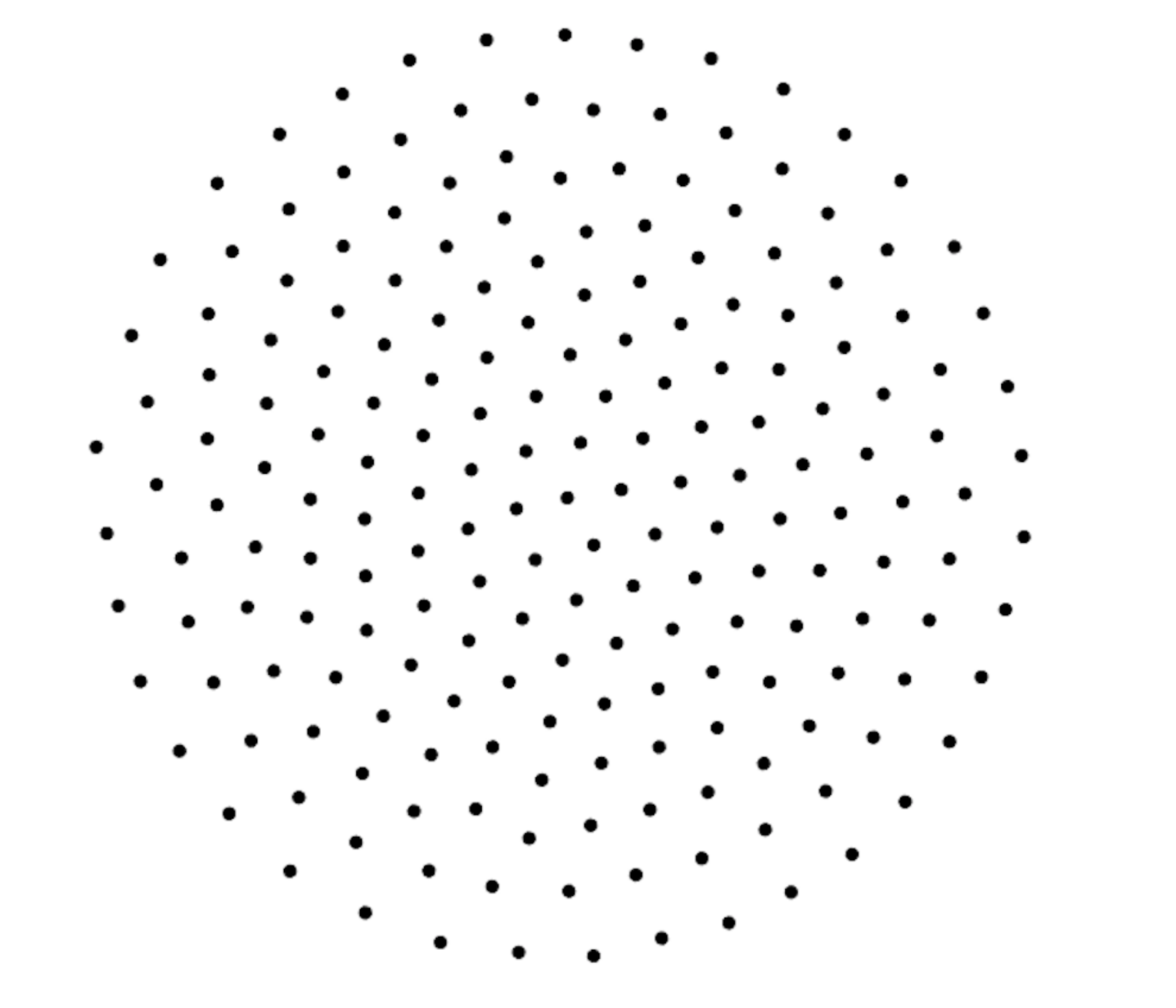}  \qquad \includegraphics[width=0.26\textwidth]{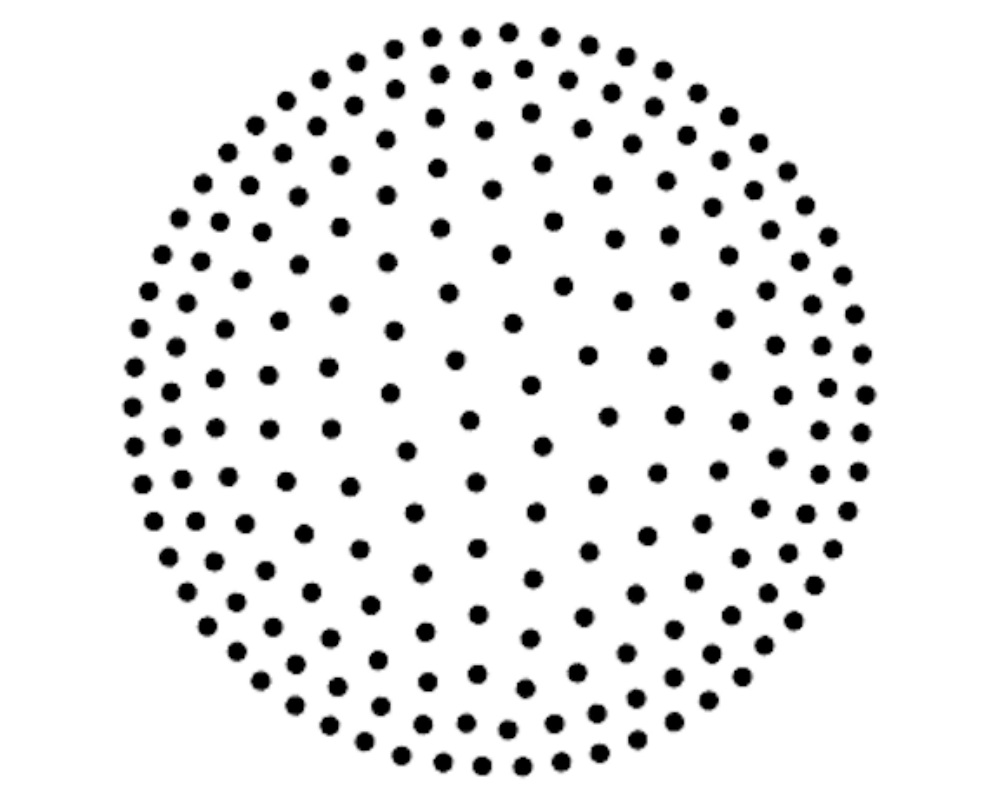}
  \qquad \includegraphics[width=0.26\textwidth]{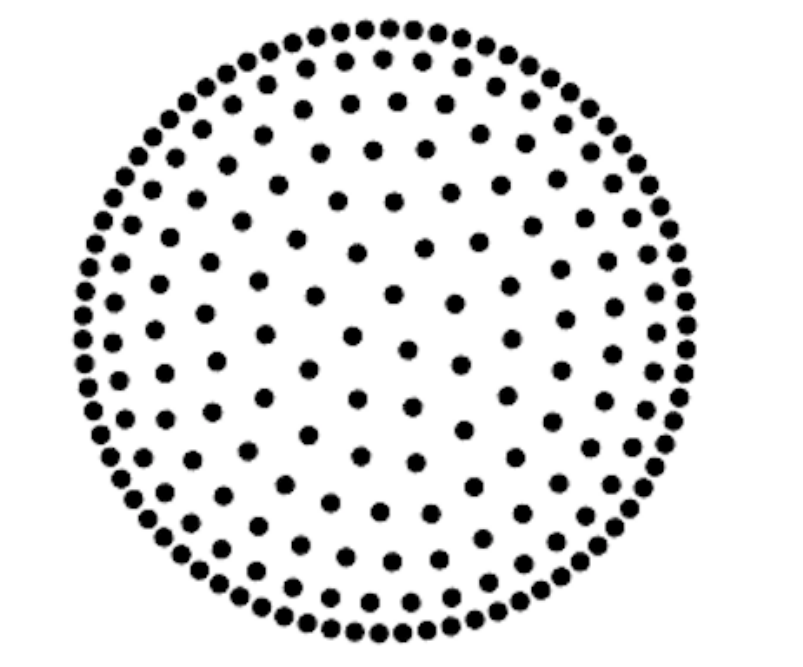}  \caption{\small Particle simulations associated with minimizers of 
(\ref{prob-P}) in dimension $n=2$.
Each particle $i=1,\dots, N$ is tracked
via the system of ODEs 
\[ \frac{dX_i}{dt} \, = \, -\frac{1}{N} 
\sum_{j=1}^N \nabla K_{\alpha,\lambda} (X_i - X_j)
\] 
until the configuration stabilizes. The interaction
kernel is give by Eq.~\eqref{kernel},
where the exponent of attraction
ranges through $\alpha=2, 20, 200$ (from left to right).
{\em Top row:}\  Repulsive term replaced 
with the logarithmic term $-\log|x-y|$ that corresponds
to the Newton potential ($\lambda=n-2$)
in two dimensions.
{\em Bottom row: }\ Exponent of repulsion $\lambda = 1$,
which lies in the super-Newtonian regime.} 
\label{fig-PS}
\end{figure}

Our first result is that in the limit as $\alpha\to\infty$,
Problem~\eqref{prob-P} approaches 
{the problem of minimizing
\begin{equation}\label{prob-L}
\E_{\infty,\lambda}(\mu)\, :=\, 
\begin{cases}
{\displaystyle
\int_{\R^n}\int_{\R^n} 
|x-y|^{-\lambda} \,d\mu(x)d\mu(y)} 
\quad & \text{if}\diam(\supp\mu) \leq1\\[.5em]
+\infty & \text{otherwise}
\end{cases}
\end{equation} 
over $\P$.} The limit is understood 
in the sense of Gamma-convergence.

\begin{theorem}[Strong attraction limit]
\label{thm:limit} 
{Let $\lambda\in (0,n)$.}
Then $\E_{\alpha,\lambda}\xrightarrow{\Gamma} 
\E_{\infty,\lambda}$ as $\alpha \to \infty$ 
in the weak topology of measures.
\end{theorem}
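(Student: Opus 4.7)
The plan is to verify the two standard inequalities for Gamma-convergence on $\P$ equipped with the weak topology of measures: a liminf bound for every weakly convergent sequence, and the existence of a recovery sequence for every $\mu \in \P$.

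For the \emph{liminf inequality}, fix $\mu \in \P$ and a sequence $\mu_\alpha \rightharpoonup \mu$, and split into cases according to $D := \diam(\supp\mu)$. If $D \leq 1$, the pointwise bound $K_{\alpha,\lambda}(x-y) \geq |x-y|^{-\lambda}$ combined with the well-known weak lower semicontinuity of the Riesz energy $I_\lambda(\nu) := \iint |x-y|^{-\lambda}\,d\nu(x)\,d\nu(y)$ (obtained by approximating the lower semicontinuous, non-negative kernel from below by bounded continuous ones and applying monotone convergence) yields
\[
\liminf_{\alpha\to\infty} \E_{\alpha,\lambda}(\mu_\alpha) \;\geq\; \liminf_{\alpha\to\infty} I_\lambda(\mu_\alpha) \;\geq\; I_\lambda(\mu) \;=\; \E_{\infty,\lambda}(\mu).
\]
If $D > 1$, pick $x_1, x_2 \in \supp\mu$ with $|x_1 - x_2| > 1$ and, shrinking slightly, open balls $U \ni x_1$ and $V \ni x_2$ with $\dist(U,V) \geq d > 1$ and $\mu(U), \mu(V) > 0$. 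By the portmanteau theorem $\liminf_\alpha \mu_\alpha(U) \geq \mu(U)$ and similarly for $V$, so
\[
\E_{\alpha,\lambda}(\mu_\alpha) \;\geq\; \int_U\!\int_V |x-y|^\alpha\,d\mu_\alpha(x)\,d\mu_\alpha(y) \;\geq\; d^\alpha\,\mu_\alpha(U)\,\mu_\alpha(V) \;\longrightarrow\; +\infty \;=\; \E_{\infty,\lambda}(\mu).
\]

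For the \emph{recovery sequence}, the only non-trivial case is $\E_{\infty,\lambda}(\mu) < \infty$, so assume $D \leq 1$ and $I_\lambda(\mu) < \infty$. Fix any $x_0 \in \supp\mu$, and for $\epsilon \in (0,1)$ define the contraction $T_\epsilon(x) := x_0 + (1-\epsilon)(x - x_0)$ together with its pushforward $\nu_\epsilon := (T_\epsilon)_\# \mu$. Then $\nu_\epsilon \rightharpoonup \mu$ as $\epsilon \to 0^+$ (by dominated convergence applied to $\int f(T_\epsilon(x))\,d\mu(x)$ for bounded continuous $f$), $\diam(\supp\nu_\epsilon) \leq 1 - \epsilon$, and the homogeneity of the interaction gives
\[
\E_{\alpha,\lambda}(\nu_\epsilon) \;=\; (1-\epsilon)^\alpha \!\iint |x-y|^\alpha\,d\mu\,d\mu \;+\; (1-\epsilon)^{-\lambda}\, I_\lambda(\mu) \;\leq\; (1-\epsilon)^\alpha \;+\; (1-\epsilon)^{-\lambda}\, I_\lambda(\mu),
\]
where we used $|x-y|^\alpha \leq 1$ on $\supp\mu \times \supp\mu$. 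Choosing $\epsilon_\alpha \to 0^+$ slowly enough that $(1-\epsilon_\alpha)^\alpha \to 0$ (e.g.\ $\epsilon_\alpha := \alpha^{-1/2}$), the diagonal sequence $\mu_\alpha := \nu_{\epsilon_\alpha}$ satisfies $\mu_\alpha \rightharpoonup \mu$ and $\limsup_\alpha \E_{\alpha,\lambda}(\mu_\alpha) \leq I_\lambda(\mu) = \E_{\infty,\lambda}(\mu)$.

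I expect the main technical point to be the supercritical-diameter case of the liminf inequality: one must quantitatively extract the excess of $\diam(\supp\mu)$ over $1$ and propagate it to $\mu_\alpha$ via weak convergence, so as to harness the explosive growth of $|x-y|^\alpha$ on sets separated by distance strictly greater than $1$. The other ingredients—weak lower semicontinuity of the Riesz energy and the dilation-plus-diagonal recovery construction—are essentially standard.
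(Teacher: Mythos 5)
Your proposal is correct and follows essentially the same route as the paper: the supercritical-diameter case is handled identically by separating two neighborhoods of support points at distance greater than one and letting $d^\alpha$ blow up; the liminf bound in the subcritical case rests on the same truncation/monotone-convergence argument for the Riesz energy (the paper uses $\min\{|x-y|^{-\lambda},t\}$ explicitly); and your contraction $T_{\epsilon_\alpha}$ with $\epsilon_\alpha=\alpha^{-1/2}$ is the same recovery construction as the paper's dilation by $\beta_n=e^{1/\sqrt{\alpha_n}}$. No gaps.
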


The limiting problem admits a solution:

\begin{theorem}[Existence]
\label{thm:minimizer}
The functional
$\E_{\infty,\lambda}$ has a global minimizer in $\P$.
\end{theorem}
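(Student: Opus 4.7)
The plan is to use the direct method of the calculus of variations. The first step is to observe that the infimum is finite: since $\lambda<n$, the Riesz kernel $|x-y|^{-\lambda}$ is locally integrable on $\R^n\times\R^n$, and the normalized Lebesgue measure on a ball of diameter $1$ provides a competitor with finite energy.

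Next I would select a minimizing sequence $(\mu_k)\subset\P$. For $k$ large we have $\E_{\infty,\lambda}(\mu_k)<\infty$, hence $\diam(\supp\mu_k)\le1$. Using the translation invariance of the kernel, translate each $\mu_k$ so that the origin lies in its support; then $\supp\mu_k\subset\overline{B(0,1)}$ for every $k$. The sequence is thus uniformly tight, and Prokhorov's theorem yields a subsequence (not relabeled) converging weakly to some $\mu\in\P$ with $\supp\mu\subset\overline{B(0,1)}$.

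The main obstacle is to show that the constraint $\diam(\supp\mu)\le1$ is preserved under weak convergence. Suppose otherwise: then there exist $x,y\in\supp\mu$ with $|x-y|>1+2\eps$ for some $\eps>0$, so $\mu(B(x,\eps))>0$ and $\mu(B(y,\eps))>0$. By the portmanteau theorem applied to these open sets, $\liminf_k \mu_k(B(x,\eps))>0$ and $\liminf_k \mu_k(B(y,\eps))>0$, which forces $\diam(\supp\mu_k)\ge|x-y|-2\eps>1$ for all large $k$, contradicting the choice of $\mu_k$. Equivalently, $\mu\mapsto \diam(\supp\mu)$ is lower semicontinuous with respect to weak convergence of probability measures.

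Finally, lower semicontinuity of the energy is classical. The kernel $(x,y)\mapsto |x-y|^{-\lambda}$ is nonnegative and lower semicontinuous on $\R^n\times\R^n$ (with value $+\infty$ on the diagonal), so it is the monotone pointwise limit of the bounded continuous truncations $k_m(x,y)=\min(|x-y|^{-\lambda},m)$. Weak convergence $\mu_k\to\mu$ in $\P$ implies weak convergence of the product measures $\mu_k\otimes\mu_k\to\mu\otimes\mu$ on $\R^n\times\R^n$, so $\int k_m\,d(\mu_k\otimes\mu_k)\to\int k_m\,d(\mu\otimes\mu)$ for each fixed $m$, and monotone convergence as $m\to\infty$ yields
\[
\liminf_{k\to\infty}\E_{\infty,\lambda}(\mu_k)\ \ge\ \E_{\infty,\lambda}(\mu).
\]
Combined with the diameter bound above, this shows $\mu$ is a minimizer. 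Beyond the lower semicontinuity of the diameter-of-support functional, every step is a routine application of tightness, Prokhorov, and standard lower semicontinuity of Riesz-type energies.
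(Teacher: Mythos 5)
Your proof is correct, but it takes a genuinely different route from the paper. The paper obtains existence as a by-product of the $\Gamma$-convergence framework: it takes minimizers $\mu_n$ of $\E_{\alpha_n,\lambda}$ for $\alpha_n\to\infty$ (whose existence is quoted from the literature), shows the energies stay bounded by testing against the uniform measure on $B_{1/2}$, proves a compactness lemma (tightness up to translation, extracted from the penalty $R^{\alpha_n}$ on mass at distance $\ge R$), and then invokes the standard fact that cluster points of minimizers of $\Gamma$-converging functionals minimize the limit. You instead apply the direct method to $\E_{\infty,\lambda}$ itself: here compactness is essentially free, because the finite-energy constraint forces $\diam(\supp\mu_k)\le 1$, so after translation all supports sit in a fixed closed ball and Prokhorov applies immediately. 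Your two analytic ingredients --- lower semicontinuity of $\diam(\supp\cdot)$ via the portmanteau theorem applied to small balls around two far-apart support points, and lower semicontinuity of $I_\lambda$ via bounded continuous truncations of the kernel plus monotone convergence --- are exactly the arguments the paper deploys inside the proof of the $\Gamma$-convergence theorem (Theorem~\ref{thm:limit}), so nothing new is needed. What your approach buys is self-containedness: it does not rely on existence of minimizers for finite $\alpha$ or on the abstract properties of $\Gamma$-limits. What the paper's approach buys is the additional information that minimizers of $\E_{\alpha,\lambda}$ converge (up to translation, along subsequences) to minimizers of the limit problem, which is the point of the whole section; existence of a minimizer of $\E_{\infty,\lambda}$ then comes for free.
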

The proofs of Theorems~\ref{thm:limit} and~\ref{thm:minimizer}
are presented in  Section \ref{sec-converge}.

\medskip\noindent{\bf Remark.}
In the literature, the interaction
kernel is sometimes normalized to
\begin{equation}\label{kernel1}
\tilde K_{\alpha,\lambda} 
{(x-y)} \, :=\, \frac{1}{\alpha}
|x-y|^{\alpha} + \frac{1}{\lambda} |x-y|^{-\lambda} \,,
\end{equation}
which assumes its minimum when $|x-y|=1$~(cf.~\cite{BCT}). 
This normalization
can be achieved by acting on $\P$ with a suitable dilation.
For the normalized kernel, the conclusions of Theorem~\ref{thm:limit}
hold with $\frac{1}{\lambda} \E_{\infty,\lambda}$ as the limiting
functional, and Theorem~\ref{thm:minimizer} applies without change.

\medskip
We then consider the nature of minimizers for
the limiting problem $\E_{\infty,\lambda}$.
This turns out to be a rather subtle 
question; indeed,  
{due to the diameter constraint,
the functional $\E_{\infty,\lambda}$ is 
non-convex on $\P$. Our approach is to rephrase}
the limiting problem as an isodiametric capacity
problem.  
More precisely, for $\lambda\in (0,n)$ and a set $A\subset\R^n$,  we define the  {\it $\lambda$-capacity} of  $A$ to be 
\begin{equation}\label{lam-cap}
C_\lambda (A)\, : =\, 
\Bigl(\inf_{\nu\in \P}
\bigl\{I_\lambda(\nu)\ \big\vert\ 
\supp\nu\subset A\bigr\}\Bigr)^{-1}, 
\end{equation} 
where 
\[ I_{\lambda}(\nu)\,
:= \int_{\R^n}\int_{\R^n}  |x-y|^{-\lambda}
{\,d\nu(x)d\nu(y)}. \] 
In the special case where $n=3$ and $\lambda=1$,
$C_\lambda$ agrees (up to a multiplicative constant)
with the electrostatic capacity of $A$. It is straightforward (cf. Lemma \ref{lem:isodiametric}) to show that 
$$
\inf_{\nu\in \P} \E_{\infty,\lambda} (\nu)
= \Bigl( \sup_{A\subset\R^n}\bigl\{ C_\lambda(A)\ \big\vert\ 
\diam (A) \le 1\bigr\}\Bigr)^{-1},
$$
with a direct relationship between the optimal measure $\nu$ on the left and optimal set $A$ on the right. 
This allows us to exploit tools from 
potential theory (cf.~\cite{L}) to partially characterize 
the support of minimizers.

 \begin{theorem}
[{Properties of minimizers of the limit problem}]
\label{thm:support} Let  $n \ge 3$,
$\lambda\in (0,n)$, and 
assume that $\mu$ minimizes $\E_{\infty,\lambda}$
on $\P$.  Then there exists a convex body
$W$ of constant width 1 such that
$$ 
\begin{cases}
\supp\mu \subset\partial W\,, \qquad& \lambda\in (0, n-2)
\ \text{\rm (sub-Newtonian),}\\
\supp\mu = \partial W\,, & \lambda= n-2 \ \text{\rm (Newtonian),}\\
\supp\mu = W\,, \qquad& \lambda\in (n-2,n)\ \text{\rm (super-Newtonian).}
\end{cases}
$$
\end{theorem}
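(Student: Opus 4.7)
The plan is to recast the minimization as a statement about the $\lambda$-equilibrium measure of a convex body of constant width~$1$, and then analyze where the equilibrium measure concentrates in each of the three curvature regimes. By Lemma~\ref{lem:isodiametric}, the support $A:=\supp\mu$ is a compact set of diameter at most $1$ maximizing $C_\lambda$ in that class, and $\mu$ is the $\lambda$-equilibrium measure of $A$. A Zorn's-lemma argument (going back to P\'al in the plane and extending to $\R^n$ via diametrical completions) embeds $A$ in a convex body $W$ of constant width~$1$. Monotonicity of $C_\lambda$ and the isodiametric optimality of $A$ give $C_\lambda(W)=C_\lambda(A)$, while strict positive-definiteness of $I_\lambda$ on probability measures of finite $\lambda$-energy yields uniqueness of the equilibrium measure, so $\mu$ is also the $\lambda$-equilibrium measure of $W$. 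Writing $V:=1/C_\lambda(W)$ and $U^\mu(x):=\int|x-y|^{-\lambda}\,d\mu(y)$, the Euler--Lagrange conditions give $U^\mu=V$ on $\supp\mu$ and $U^\mu\ge V$ quasi-everywhere on $W$.

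The central analytical ingredient is the identity $\Delta_x|x-y|^{-\lambda}=\lambda(\lambda-n+2)|x-y|^{-\lambda-2}$, so that off $\supp\mu$ the potential $U^\mu$ is harmonic if $\lambda=n-2$, strictly subharmonic if $\lambda>n-2$, and strictly superharmonic if $\lambda<n-2$. For $\lambda\in[n-2,n)$, Frostman's maximum principle yields $U^\mu\le V$ on $\R^n$; combined with $U^\mu\ge V$ q.e.\ on $W$ and continuity of finite-energy Riesz potentials off the support, this forces $U^\mu\equiv V$ on the open set $W^\circ\setminus\supp\mu$. In the super-Newtonian case $\lambda>n-2$, any open ball $B\subset W^\circ\setminus\supp\mu$ would satisfy $\Delta U^\mu\equiv 0$ on $B$ (from $U^\mu$ constant) while the integral formula for $\Delta U^\mu$ yields $\Delta U^\mu>0$ pointwise on $B$, a contradiction; hence $W^\circ\subset\supp\mu$ and therefore $\supp\mu=W$. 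In the Newtonian case $\lambda=n-2$, the equality $U^\mu=V$ q.e.\ on $W^\circ$ implies $U^\mu=V$ Lebesgue-a.e.\ (polar sets are Lebesgue null when $\lambda<n$), so distributionally $-c_n\mu|_{W^\circ}=\Delta U^\mu=0$, giving $\mu|_{W^\circ}=0$ and $\supp\mu\subset\partial W$. To upgrade to equality, a point $x_0\in\partial W\setminus\supp\mu$ would lie in an open ball $B$ disjoint from $\supp\mu$ meeting both $W^\circ$ and the connected exterior $W^c$; real analyticity of the harmonic function $U^\mu$ on $B$, together with $U^\mu\equiv V$ on the open set $B\cap W^\circ$, would propagate $U^\mu\equiv V$ first to $B$ and then by unique continuation to all of $W^c$, contradicting $U^\mu(x)\to 0$ as $|x|\to\infty$.

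The sub-Newtonian case $\lambda\in(0,n-2)$ is the main obstacle, since Frostman's maximum principle fails for this hyper-singular kernel and the identity $U^\mu\equiv V$ on $W^\circ\setminus\supp\mu$ is lost. Here I would instead show $C_\lambda(W)=C_\lambda(\partial W)$ by a balayage construction tailored to Riesz kernels of order $n-\lambda>2$ (as developed in Landkof): any measure supported in $W$ can be swept onto $\partial W$ without increasing its $\lambda$-energy. Uniqueness of the equilibrium measure then forces $\supp\mu\subset\partial W$. The theorem does not claim equality here, and indeed it can fail when $\partial W$ has corners or lower-dimensional features where the equilibrium measure need not distribute.
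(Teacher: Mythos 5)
Your overall architecture is the paper's: reduce via Lemma~\ref{lem:isodiametric} to the statement that $\mu$ is the $\lambda$-equilibrium measure of a convex body $W$ of constant width $1$, then read off the support from the sign of $\Delta|x|^{-\lambda}$. The super-Newtonian argument (the potential is constant on $W^\circ\setminus\supp\mu$ by the Frostman conditions plus the maximum principle, which contradicts strict subharmonicity off the support) is essentially identical to the paper's, which phrases the contradiction via the strong maximum principle. Your Newtonian case is in fact more self-contained than the paper's: where the paper cites Landkof (p.~164) for $\partial W\subset\supp\mu$, you obtain $\supp\mu\subset\partial W$ from the distributional identity $\Delta U^\mu=-c_n\mu$ and then upgrade to equality by real-analytic continuation of the harmonic potential through a ball straddling $\partial W$ into the connected exterior, contradicting decay at infinity. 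That argument is correct (it uses that $\R^n\setminus W$ is connected for a convex body in $n\ge 2$).

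The genuine soft spot is the sub-Newtonian case. You propose to prove $C_\lambda(W)=C_\lambda(\partial W)$ by sweeping any measure on $W$ onto $\partial W$ ``without increasing its $\lambda$-energy'' via balayage ``as developed in Landkof.'' But Riesz kernels of order $n-\lambda>2$ are exactly the regime in which Landkof's balayage theory is \emph{not} available: balayage, like the maximum principle, breaks down for orders above $2$ (the swept measure need not be positive and the usual energy and domination inequalities fail), which is why that part of the theory is restricted to orders at most $2$. The fact you need is true, but the classical route is different: for $\lambda<n-2$ the kernel $|x|^{-\lambda}$ is superharmonic on all of $\R^n$, hence so is $U^\mu$; the Frostman conditions give $U^\mu\ge V$ on $W^\circ$ (nearly everywhere, hence everywhere by the mean-value characterization) and $U^\mu\le V$ on $\supp\mu$, so a point of $\supp\mu\cap W^\circ$ would be an interior minimum, forcing $U^\mu\equiv V$ on a component of $W^\circ$ by the strong minimum principle. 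This is impossible because the distributional Laplacian $\lambda(\lambda+2-n)\int|x-y|^{-\lambda-2}\,d\mu(y)$ is strictly negative --- note $|x|^{-\lambda-2}$ is locally integrable precisely because $\lambda<n-2$. Alternatively, simply cite the classical statement that for orders $\ge 2$ the equilibrium measure of a body is carried by its boundary, as the paper does (Landkof, p.~162). With that substitution your proof is complete; your closing caveat about equality failing in the sub-Newtonian case is consistent with the theorem, which only asserts the inclusion there.
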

The set $W$ may depend on $\mu$ as well
as $\lambda$. We do not know whether minimizers are unique 
up to translation, and whether $\E_{\infty,\lambda}$
admits additional critical points, including local minima.
The proof of Theorem~\ref{thm:support} 
is presented in Section \ref{sec-characterization}. 

The theorem extends to lower dimensions as follows.
For $n=1$, the entire range $\lambda\in (0,1)$
is super-Newtonian, and the support of any minimizing
measure is an interval of length one.
In dimension $n=2$, the entire range $\lambda\in (0,2)$
is super-Newtonian as well, and the support of any minimizing
measure is a planar convex set $W$ of constant width 1.
The role of the Newton potential
$|x-y|^{2-n}$ is played by the logarithmic kernel
$-\log|x-y|$; in this case, the support of a minimizer
is the boundary of a planar convex set of 
constant width 1.

In Section \ref{sec-cap-est}, we  prove the following result pertaining to the asymmetry of minimizers in high space 
dimensions. Precisely, we prove 
\begin{theorem}[Asymmetry of minimizers in high dimensions]
\label{thm:sym-break} For every $\lambda>0$ there 
exists $N$ such that for all $n\geq N$,
$$
\sup \left\{ C_\lambda(A) \ \big\vert\ 
A\subset \R^n, \diam(A)\le 1\right\}  > C_\lambda({B^{(n)}_{1/2}})\,,
$$
where $B^{(n)}_{1/2}=\{x\in\R^n:|x|\leq\frac{1}{2}\}$. 
\end{theorem}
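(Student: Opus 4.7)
The plan is to establish a two-sided estimate that forces $B^{(n)}_{1/2}$ to lose the isodiametric capacity competition once $n$ is large compared to $\lambda$. I would first obtain a dimension-free lower bound on the infimum of $I_\lambda(\nu)$ over probability measures supported in $B^{(n)}_{1/2}$, and then exhibit probability measures supported in explicit sets of diameter $1$ whose $I_\lambda$-energies tend to $1$ as $n\to\infty$. Since $C_\lambda(A)^{-1}$ is exactly the restricted infimum, comparing the two quantities will give the claimed strict inequality.

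For the lower bound, I would use a second-moment argument. For any $\nu\in\P$ with $\supp\nu\subset B^{(n)}_{1/2}$, the estimate $|x|\le 1/2$ on $\supp\nu$ gives
\[
\int_{\R^n}\!\int_{\R^n} |x-y|^2\,d\nu(x)\,d\nu(y) \;=\; 2\int|x|^2\,d\nu - 2\Bigl|\int x\,d\nu\Bigr|^2 \;\le\; 1/2,
\]
so Cauchy-Schwarz gives $\int\!\int|x-y|\,d\nu\,d\nu\le 1/\sqrt 2$. Jensen's inequality applied to the convex function $t\mapsto t^{-\lambda}$ on $(0,\infty)$ then yields
\[
I_\lambda(\nu)\;\ge\;\Bigl(\int\!\int|x-y|\,d\nu(x)\,d\nu(y)\Bigr)^{-\lambda}\;\ge\; 2^{\lambda/2},
\]
so $C_\lambda(B^{(n)}_{1/2})\le 2^{-\lambda/2}$ for every $n$.

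For the competing family, I would take the regular $n$-simplex $v_1,\dots,v_{n+1}\in\R^n$ with common edge length $1-2r_n$ and set $A_n=\bigcup_{i=1}^{n+1}B(v_i,r_n)$, which has diameter exactly $1$. With $\nu_n=\frac{1}{n+1}\sum_i\nu_i$ and $\nu_i$ the normalized Lebesgue measure on $B(v_i,r_n)$, the cross terms satisfy $\int\!\int|x-y|^{-\lambda}\,d\nu_i\,d\nu_j\le(1-4r_n)^{-\lambda}$ because $|x-y|\ge 1-4r_n$ on $\supp\nu_i\times\supp\nu_j$ for $i\ne j$, while dilation invariance gives the self terms as $r_n^{-\lambda}\,I_\lambda(\nu_{B_1^{(n)}})$. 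A layer-cake computation together with the elementary small-ball bound $\nu_{B_1^{(n)}}(B_\epsilon(x))\le\epsilon^n$ shows $I_\lambda(\nu_{B_1^{(n)}})\le 1+\lambda/(n-\lambda)$, which is bounded in $n$. Assembling these estimates,
\[
I_\lambda(\nu_n)\;\le\;\frac{(1+\lambda/(n-\lambda))\,r_n^{-\lambda}}{n+1}+\frac{n}{n+1}(1-4r_n)^{-\lambda},
\]
and the choice $r_n=n^{-1/(2\lambda)}$ drives the first summand to $0$ and the second to $1$. Hence $I_\lambda(\nu_n)\to 1$ as $n\to\infty$. Since $1<2^{\lambda/2}$ for every $\lambda>0$, combining with the lower bound gives $C_\lambda(A_n)>C_\lambda(B^{(n)}_{1/2})$ for all $n$ beyond a threshold depending only on $\lambda$, which is the claim.

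The main technical obstacle is the dimension-uniform bound on the self-energy $I_\lambda(\nu_{B_1^{(n)}})$: the crude estimate via $\int_{B_2}|z|^{-\lambda}\,dz$ divided by $|B_1^{(n)}|$ carries an exponentially bad factor $2^n$, but the layer-cake formula combined with the sharp small-ball probability $\epsilon^n$ absorbs this factor and keeps the self-energy $O(1)$ in $n$. That uniform bound is precisely what lets the simplex configuration beat the ball once $n$ is large enough.
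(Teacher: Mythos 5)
Your proof is correct, and it takes a genuinely different route from the paper on both halves of the argument. For the ball, the paper invokes the exact formula for $C_\lambda(B^{(n)}_{1/2})$ from Landkof together with Stirling's approximation to conclude $C_\lambda(B^{(n)}_{1/2})\to 2^{-\lambda/2}$; your second-moment computation combined with Cauchy--Schwarz and Jensen gives the one-sided bound $C_\lambda(B^{(n)}_{1/2})\le 2^{-\lambda/2}$ for \emph{every} $n$, which is all the theorem needs and is self-contained (no explicit equilibrium measure required). For the competing sets, the paper places spherical caps of diameter $1$ on the sphere of radius $\tfrac12\sqrt2$, glues a configuration in $\R^m$ to one in $\R^n$ so that all cross-distances equal exactly $1$, and extracts the limit $1$ via superadditivity and Fekete's lemma; you instead put small balls of radius $r_n$ at the vertices of a regular simplex of edge length $1-2r_n$ and estimate directly, which yields an explicit quantitative rate but requires the dimension-uniform self-energy bound $I_\lambda(\nu_{B_1^{(n)}})\le 1+\lambda/(n-\lambda)$ --- your layer-cake argument with the small-ball volume bound $\nu_{B_1^{(n)}}(B_\epsilon(x))\le\epsilon^n$ handles this correctly for $n>\lambda$, which is the only regime you need. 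Your assembled estimate, the choice $r_n=n^{-1/(2\lambda)}$, and the final comparison $1>2^{-\lambda/2}$ are all sound, so the argument goes through.
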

This demonstrates that for any fixed value of $\lambda>0$, 
the ball ceases to be optimal when $n$ is sufficiently large. 
Thus, in this 
regime optimal measures are supported 
on the boundary of sets that are not 
radially symmetric. As a result, minimizers 
of $\E_{\alpha,\lambda}$ in high dimensions
must also be asymmetric when $\alpha$ is large.

The prospect of symmetry-breaking 
presents an interesting, largely open, question.  
Even in low space dimensions, we suspect that when $0<\lambda<< n-2$ the
maximal capacity among bodies of given diameter
may be achieved by non-symmetric sets,
and that the equilibrium measure may be supported 
on a proper 
subset of the boundary. For example, in Fig.~\ref{fig-3D} we 
present the results of 3D particle simulations for 
$\lambda = 0.01$ and respectively, $\alpha = 2, 20, 200$. 
The simulations suggest that minimizers are asymmetric for 
large $\alpha$.  However, the number of particles is too small
to draw conclusions about the supports of minimizing measures.
\begin{figure}[h]
  \includegraphics[width=0.3\textwidth]{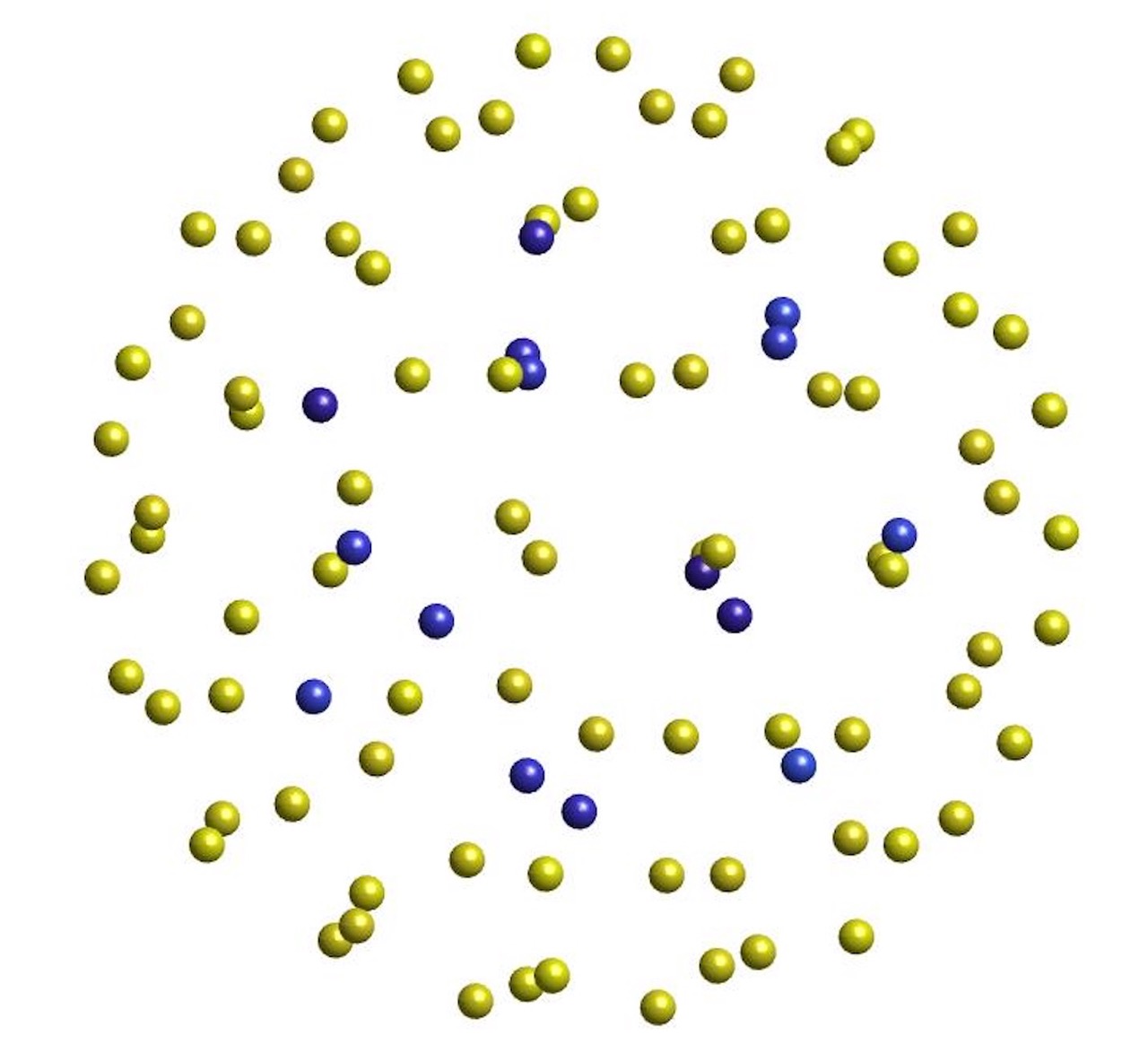}\,\,\,\,\,
    \includegraphics[width=0.3\textwidth]{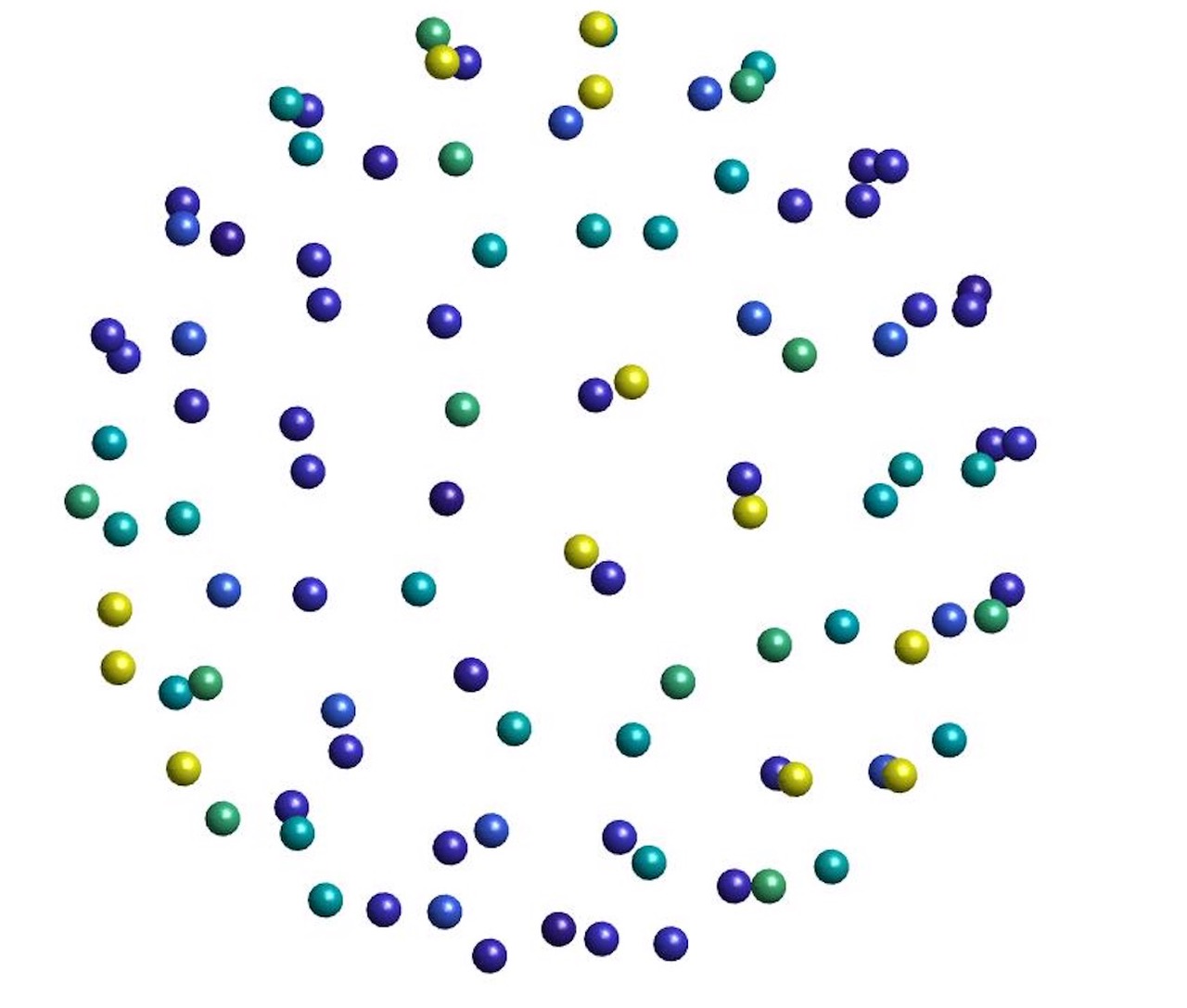}
      \includegraphics[width=0.3\textwidth]{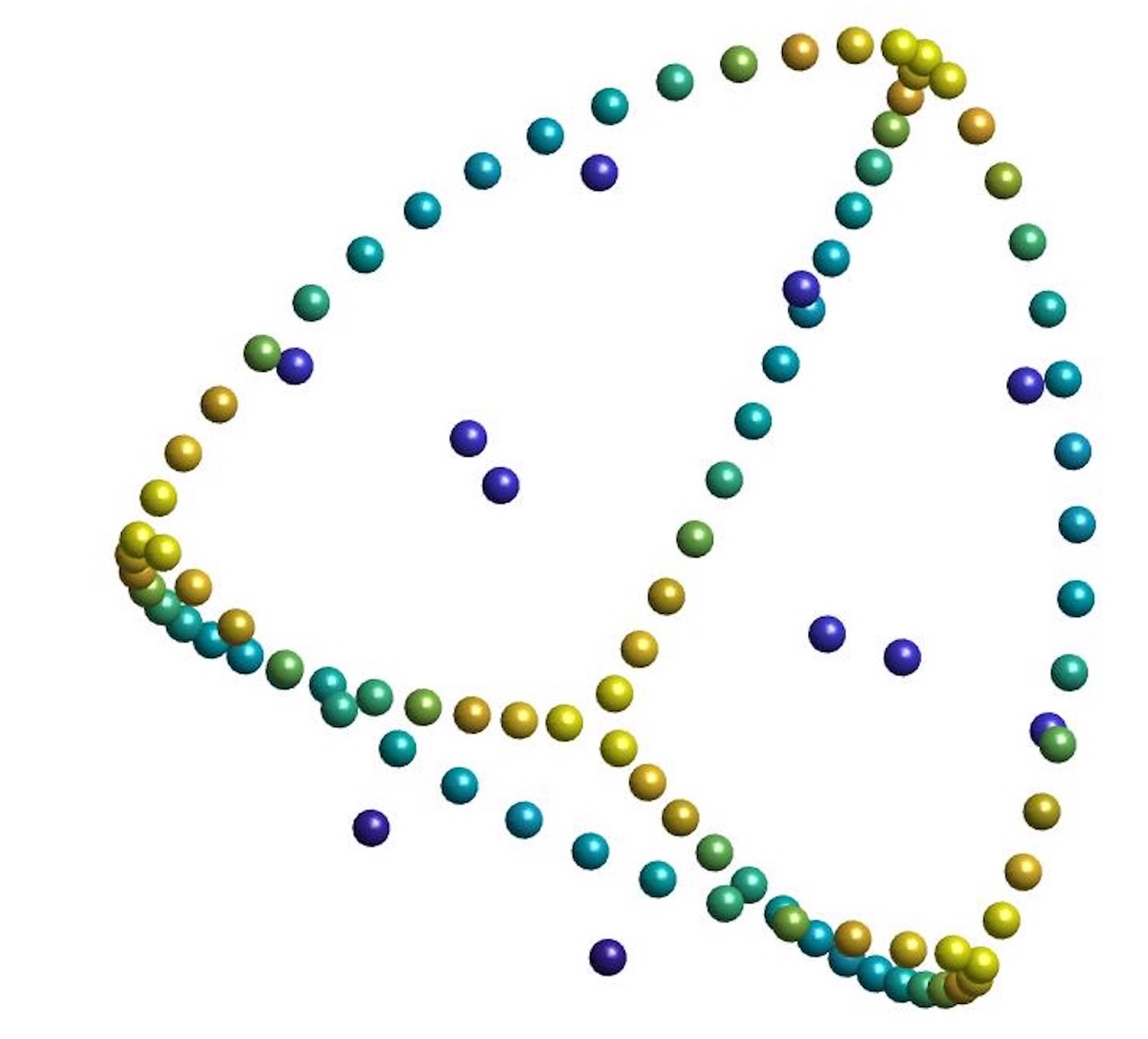}
  	\caption{Results of particle simulations 
in dimension $n=3$. The exponent of
attraction ranges through $\alpha = 2, 20, 200$ (from left to right).
The exponent of repulsion is $\lambda = 0.01$, which lies in 
the sub-Newtonian regime.}
\label{fig-3D}
\end{figure}

\section{Related work and  further questions} 

\subsection{Comparisons with Related Results}
According to Theorem~\ref{thm:support},
every minimizer $\mu$ of the 
functional $\E_{\infty,\lambda}$
is supported on a convex body $W_\mu$ of constant
width 1, and the following relations, summarized in the table below, hold true. 

\begin{table}[!h]
\begin{center}
\caption{\small Characteristics of minimizers of $\E_{\infty,\lambda}$ 
in terms of a body $W_\mu$ of constant width.}
\label{table1}
\begin{tabular}{| c | c |} 
 \hline
  Repulsion & Geometry\\ \hline 
 $\lambda<n-2$ &$ \supp\mu\subset\partial W_\mu$  \\ \hline 
 $\lambda=n-2$ &  $\supp\mu=\partial W_\mu$  \\ \hline 
 $\lambda>n-2 $& $ \supp\mu=W_\mu$ \\ \hline 
\end{tabular}
\end{center}
\end{table}

\noindent In particular, the Hausdorff dimension of $\mu$ satisfies
$$ 
{
\text{\rm dim} (\supp \mu)} \ 
\begin{cases} \le n-1\,, \qquad& \lambda\in (0, n-2)\,,\\
=n-1\,,& \lambda=n-2\,,\\
= n\,, & \lambda\in (n-2,n)\,.
\end{cases}
$$
To offer some perspective, note that 
classical results of geometric measure theory imply that 
{$\dim (\supp \mu)\ge \lambda$}  for
every Borel measure $\mu$ with 
$\mathcal{E}_{\infty,\lambda}(\mu) < \infty$
(see for example Theorem 4.13 in~\cite{F}).
For minimizers of energy functionals defined by
attractive-repulsive pair interaction kernels, 
a stronger lower bound was obtained in~\cite[Theorem 1]{Ba-etal}. 
Specifically, minimizers of $\E_{\alpha,\lambda}$
in the sub-Newtonian regime $\lambda\in (0,n-2)$
satisfy
\begin{equation}\label{carrillo}
\dim( \supp \mu)\ge \lambda+2\,.
\end{equation}
When $\lambda\in (n-3,n-2)$ this lower bound exceeds $n-1$, 
and in particular exceeds the dimension of
the support of the corresponding minimizer of 
$\E_{\infty,\lambda}$.
The results of~\cite{Ba-etal} apply more generally 
to {\em local minimizers}, in an optimal 
transport topology, for a larger class of 
attractive-repulsive functionals with integrable singularities
at the origin. In light of (\ref{carrillo}), which holds for all $\alpha >0$, the dimensional reduction of the support for $\lambda \le n-2$ (cf. Theorem \ref{thm:support} or Table \ref{table1}) is only achieved in the limit. 
In this limit,  the dimension of 
minimizers  are strictly smaller then 
those in the finite regime, a 
consequence of the strength of the diameter constraint.

Through Theorem~\ref{thm:limit} and Lemma~\ref{lem:isodiametric}
(below), the question of what the minimizers of the 
limiting functional look like is transformed into an 
isodiametric capacity problem: 
{\em For a given $\lambda\in (0,n)$, which sets of diameter 1 
have the largest $\lambda$-capacity?}
Although for any given set $W\subset\R^n$ the equilibrium measure
that realizes the capacity is unique, there could be more than one 
capacity-maximizing set.

One candidate for a set that maximizes
capacity among sets of diameter 1
is the ball of radius $\frac12$, 
which uniquely maximizes volume under the diameter restriction. 
For each $\lambda\in (n-2,n)$, the equilibrium measure on the ball 
is a well-known positive, radially symmetric 
density, and for $\lambda\le n-2$ it is the uniform measure on the 
boundary sphere~\cite[p. 163]{L}.
Note, however, that the 
ball {\em minimizes} capacity among sets of given volume,
indicating competition between size and shape
in the isodiametric problem.

There are a number of related results for 
the weak repulsion regime (corresponding to 
$\lambda<0$) which imply that the support of minimizers 
has dimension zero~\cite[Theorem 2]{Ba-etal}
provided that the pair interaction kernel vanishes 
of higher order as $|x-y|\to 0$.
In particular, the variance is maximized, among 
probability measures on $\R^n$ whose
support has diameter one, by the uniform
measure on the vertices of the unit simplex~\cite{LM}.

\subsection{Restricting Problem~\eqref{prob-P} to Densities and Sets}

In an interesting variant of Problem~\eqref{prob-P},
the minimization is restricted  to absolutely
continuous probability measures $\mu=\rho dx$ with
density bounded by $\rho\le m^{-1}$ for some
$m>0$.
\begin{equation}\label{prob-R}
\begin{cases} {\rm Minimize} \quad  
{\displaystyle
\E'_{\alpha,\lambda}(\rho)\, :=\, \int_{R^n}\int_{R^n}
K_{\alpha,\lambda}(x-y)\rho(x)\rho(y)\, dxdy}
\\[.5em]
{\rm over} \,\, \mathcal{A}_m:=
{\displaystyle
\bigl \{ \rho\in L^1(\R^n)\ \big\vert\ 0\le \rho\le m^{-1}
\,,\ \int_{\R^n} \rho\, dx=1 \bigr\}\,.}
\end{cases}
\end{equation} 
The density constraint plays the role of
an additional repulsive term in the energy.
This is relevant for biological aggregation problems, 
where the density of individuals cannot exceed a certain critical value.
By rescaling, Problem~\eqref{prob-R}
is equivalent to minimizing $\E'_{\alpha,\lambda}(\rho)$
among measures of mass $m$, subject to
the density constraint $\rho\le 1$. Unlike
{Problem~(\ref{prob-P})}, the mass $m$ does not scale out of the problem. 
It is known that for each $\alpha>0$ and $\lambda\in (0,n)$,
the functional $\E'_{\alpha,\lambda}$
has a minimizer on $\mathcal{A}_m$
for any $m>0$ (cf. \cite{CFT}). 

Since the set of probability measures of density at most $m^{-1}$
is a closed convex subspace of $\P$,
Theorems~\ref{thm:limit} and~\ref{thm:minimizer}
continue to hold.

\begin{corollary} [{Strong attraction limit with density
constraint}]
\label{cor:limit}
For $\lambda\in (0,n)$ and $\mu\in \P$,
let $\E'_{\alpha,\lambda}$ be as in Problem~\eqref{prob-R},
and define
$\E'_{\infty,\lambda}(\rho):=\E_{\infty,\lambda}(\rho\, dx)$
for $\rho\in L^1$.
Then 
\begin{enumerate}
\item 
$\E'_{\alpha,\lambda}\xrightarrow{\Gamma} \E'_{\infty,\lambda}$ 
as $\alpha \to \infty$  in the weak topology on $L^1$.
\item For each $m\le |B_{\frac12}|$, 
the functional $\E'_{\infty,\lambda}$
attains a global minimum on $\mathcal{A}_m$.
\end{enumerate}
\end{corollary}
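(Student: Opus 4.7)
The plan is to deduce both parts from Theorems~\ref{thm:limit} and~\ref{thm:minimizer}, using the fact noted just before the corollary that $\mathcal{A}_m$ embeds as a weakly closed, convex subset of $\P$ via $\rho \mapsto \rho\,dx$. The compatibility of topologies I will rely on is this: for elements of $\mathcal{A}_m$, weak $L^1$-convergence $\rho_k \rightharpoonup \rho$ implies narrow convergence $\rho_k\,dx \rightharpoonup \rho\,dx$ as probability measures (test against $\phi \in C_b(\R^n) \subset L^\infty(\R^n)$), and the pointwise bound $0 \le \rho \le m^{-1}$ together with unit mass $\int \rho = 1$ pass to the limit, so $\rho \in \mathcal{A}_m$.

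For part (i), the $\liminf$ inequality transfers directly from Theorem~\ref{thm:limit}. For the recovery sequence, given $\rho \in \mathcal{A}_m$, I would use the constant sequence $\rho_\alpha \equiv \rho$, which trivially lies in $\mathcal{A}_m$. When $\E'_{\infty,\lambda}(\rho) = +\infty$ there is nothing to show; otherwise $\diam(\supp \rho) \le 1$, so on $\supp\rho \times \supp\rho$ the attractive integrand $|x-y|^\alpha$ is bounded by $1$ and converges pointwise to $0$ off the Lebesgue-null set $\{|x-y|=1\}$, and dominated convergence gives $\E'_{\alpha,\lambda}(\rho) \to \E'_{\infty,\lambda}(\rho)$. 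For part (ii), I would apply the direct method. The hypothesis $m \le |B_{1/2}|$ is exactly the condition (via the isodiametric inequality) under which $\mathcal{A}_m$ contains $\rho_0 := m^{-1}\mathbf{1}_B$ for a ball $B$ of volume $m$ and radius at most $\tfrac12$, so $\inf_{\mathcal{A}_m} \E'_{\infty,\lambda} \le \E'_{\infty,\lambda}(\rho_0) < \infty$. For a minimizing sequence $\rho_k$ the finite energy forces $\diam(\supp \rho_k) \le 1$; after translating I may assume $\supp \rho_k \subset \bar B(0,1)$ for all $k$. The family is bounded in $L^\infty$ by $m^{-1}$ on a common compact set, hence equi-integrable, so by the Dunford--Pettis theorem a subsequence converges weakly in $L^1$ to some $\rho \in \mathcal{A}_m$. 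Lower semicontinuity of the Gamma-limit $\E'_{\infty,\lambda}$, an automatic property of Gamma-limits in the underlying topology, then yields $\E'_{\infty,\lambda}(\rho) \le \liminf_k \E'_{\infty,\lambda}(\rho_k) = \inf_{\mathcal{A}_m}\E'_{\infty,\lambda}$.

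The only genuinely delicate point in this plan is that the diameter constraint $\diam(\supp\rho) \le 1$ is not manifestly stable under weak $L^1$ limits, since supports behave badly with respect to such limits. I would sidestep this by reading the constraint off a posteriori from the finiteness of $\E'_{\infty,\lambda}(\rho)$, which is guaranteed by the lower semicontinuity step above, rather than tracking supports directly.
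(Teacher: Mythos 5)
Your argument is correct, and it is genuinely more detailed than what the paper offers: the paper's entire justification is the one-line remark that $\mathcal{A}_m$ is a closed convex subset of $\P$, so that Theorems~\ref{thm:limit} and~\ref{thm:minimizer} ``continue to hold.'' Your route differs in two substantive ways, and in both cases it buys something. For the recovery sequence you use the constant sequence together with dominated convergence, exploiting that $\rho\,dx\otimes\rho\,dx$ is absolutely continuous so that $\{|x-y|=1\}$ is null; this is not available for general measures (two atoms at distance exactly $1$ defeat it, which is why Theorem~\ref{thm:limit} uses dilations), but it is exactly right here, and it quietly repairs a gap in the paper's one-liner: the dilation recovery sequence $\mu(\beta_n\,\cdot)$ with $\beta_n>1$ scales the density by $\beta_n^n>1$ and so leaves $\mathcal{A}_m$. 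For existence you use the direct method with Dunford--Pettis, whereas the paper's implicit route passes through minimizers of $\E'_{\alpha_n,\lambda}$ on $\mathcal{A}_m$ (existence from \cite{CFT}) and the Gamma-convergence machinery, with the uniform density on $B_{1/2}$ as the competitor that makes the hypothesis $m\le|B_{\frac12}|$ enter -- the same role it plays in your argument. One small point deserves tightening: the assertion that a sequentially defined Gamma-limit is automatically sequentially lower semicontinuous requires a diagonal extraction and hence first countability of the topology; this is fine here because on the uniformly integrable, compactly supported set you work with the weak $L^1$ topology is metrizable, but it is worth saying. Alternatively, you can avoid the abstract appeal and your stated worry altogether: the diameter constraint \emph{is} stable under the narrow convergence implied by weak $L^1$ convergence in $\mathcal{A}_m$ -- if $p,q\in\supp\rho$ with $|p-q|>1$, the portmanteau lower bound on open neighborhoods $U,V$ with $\dist(U,V)>1$ forces $\rho_k(U),\rho_k(V)>0$ for large $k$, contradicting $\diam(\supp\rho_k)\le 1$ -- which is precisely the first case of the proof of Theorem~\ref{thm:limit}, and $I_\lambda$ is narrowly lower semicontinuous by the truncation argument there.
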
 
\noindent The assumption on $m$ guarantees that the
energy of the uniform measure on $B_{\frac12}$
remains bounded as $\alpha\to\infty$ 
{(}see the proof of 
Theorem~\ref{thm:minimizer}{)}.
{As $m\to 0$,} the measures corresponding to
a sequence of minimizers converge (up to translations, along
suitable subsequences, weakly in $\P$) to minimizers of $\E_{\infty,\lambda}$.

Problem~\ref{prob-R} is of interest also
when $m$ is large.  Under certain assumptions on $\lambda$ and $\alpha$,
$\E'_{\alpha,\lambda}$ is minimized 
for $m$ sufficiently large by the uniform 
probability density on a set $S$ of volume 
$m$ (\cite{BCT,FL,Lo}). In the context of aggregation
models, this indicates the formation of
a swarm.  A minimizing set is the  solution of 
the purely geometric, non-local
shape optimization problem
\begin{equation}\label{prob-S}
\begin{cases} {\rm Minimize} \quad  
\E''_{\alpha,\lambda}(S)\, := \E_{\alpha,\lambda} (\nu_S) \\[.5em]
{\rm over} \,\, \mathcal{S}_m:=
\bigl \{ S\subset\R^n\ \big\vert\ |S|=m\bigr\}\,,
\end{cases}\end{equation} 
where $\nu_S$ is the uniform probability measure on $S$.
It turns out that the infimum in Problem~\eqref{prob-S}
agrees with Problem~\eqref{prob-R}, but it is not always attained.
If the density of a minimizer
of $\E'_{\alpha,\lambda}$ on $\mathcal{A}_m$ falls 
strictly between $0$ and $m^{-1}$ on all or part 
of its support, then the shape optimization
problem~\eqref{prob-S} has no solution~\cite[Theorem 4.4]{BCT},
indicating a failure to fully aggregate.  
In this case, minimizing sequences for
Problem~\eqref{prob-S} diverge due to oscillations.
When $m$ is too small, typically $\rho<m^{-1}$
everywhere (cf. \cite{BCT, FL, Lo}), preventing
even partial aggregation.

All known solutions of the shape optimization problem~\eqref{prob-S}
are radially symmetric, and in many cases 
they are large balls~(cf. \cite{BCT,FL, Lo}). 
It may be possible to discover
interesting examples of symmetry-breaking 
in the strong-attraction limit, 
using Corollary~\ref{cor:limit} and the 
known relation between Problems~\eqref{prob-R} and~\eqref{prob-S}.

We are not aware of any explicit characterization
of the minimizers for
$\E'_{\infty,\lambda}$ on $\mathcal{A}_m$, 
even in the Newtonian case.  Suppose that $W$ maximizes 
capacity among sets of given diameter.
Since the density constraint prevents minimizers 
to concentrate on a lower-dimensional set, 
one may wonder whether a thin 
neighborhood of $\partial W$ might appear as a solution
to Problem~\eqref{prob-S}, and whether 
such a solution persists for sufficiently large finite
values of $\alpha$? When $W$ is not a ball,
this could give rise to symmetry-breaking in
Problems~\eqref{prob-R} and~\eqref{prob-S}.

\section{Convergence}
\label{sec-converge}

We begin by recalling  a few definitions. 
Given a topological space $X$, let
$(G_n)_n$ be a sequence of functions on $X$.
We say that $(G_n)$ {\bf Gamma-converges} 
to a function $G$ ($G_n \xrightarrow{\Gamma} G$) 
if the following two conditions hold for every $x\in X$:
\begin{itemize}	
\item {\em Lower bound inequality:}\  for all sequences $(x_n)_n\subset X$ 
such that $x_n\rightarrow x\in X,$
$$
\liminf\limits_{n\rightarrow\infty} G_n(x_n)\geq G(x)\,;
$$
\item {\em Upper bound inequality:}\ 
for all $x\in X$ there exists a sequence $(x_n)_n\subset X$ 
such that $x_n\rightarrow x$ and
$$
\limsup\limits_{n\rightarrow\infty} G_n(x_n)\leq G(x)\,.
$$
\end{itemize}
Gamma-convergence has many useful implications,
the most important of which is that
if $x_n$ minimizes $G_n$ over $X$, then every cluster point 
of the sequence $(x_n)$ minimizes $G$ over $X$ (cf.~\cite{Br}).

Given a sequence of measures $(\mu_n)_n\subset\P$,  we 
say $(\mu_n)_n$ {\bf converge weakly} 
to $\mu\in\P$ 
($\mu_n\rightharpoonup\mu$) if 
$$
\lim_{n\to\infty} \int\phi \, d\mu_n = \int\phi \,d\mu
$$
for every bounded continuous function $\phi$
on $\R^n$.  This induces the weak topology on~$\P$. 

\begin{proof} [Proof of Theorem \ref{thm:limit}]
Let $\mu\in \P$ be given. In the case where 
$\diam(\supp\mu)>1$, choose two points
$p,q\in \supp\mu$ with $|p-q|>1$. 
By continuity of the distance function, 
there exist open neighborhoods $U, V$ of
$p$ and $q$ such that $\dist(U, V)>1$. 
For any sequence of measures $(\mu_n)$ with
$\mu_n\rightharpoonup \mu$ in $\P$, we have
\begin{align*}
\E_{\alpha,\lambda}(\mu_n)
&=\int_{\R^n}\int_{\R^n}|x-y|^{\alpha}+|x-y|^{-\lambda}\,
d\mu_n(x)d\mu_n(y)\\
&\ge \bigl(\dist(U,V)\bigr)^\alpha  \mu_n(U)\mu_n (V)\,.
\end{align*}
Since $\liminf \mu_n(U) \ge  \mu(U)>0$ and likewise for $V$,
it follows that $\E_{\alpha_n,\lambda}(\mu_n)\to \infty$
along every sequence $(\alpha_n)$ with
$\alpha_n\to\infty$, verifying
simultaneously the lower and upper bound inequalities
for this case.

Otherwise, $\diam(\supp\mu)\leq1$.  
To see the lower bound inequality, 
let $(\mu_n)$ be a sequence in $\P$
that converges weakly to $\mu$, and let $t>0$.
For every $\alpha>0$, 
$$
\E_{\alpha,\lambda}(\mu_n) 
\ge \int_{\R^n}\int_{R^n} 
\min\{|x-y|^{-\lambda}, t\} \, d\mu_n(x)d\mu_n(y)\,.
$$
Since $\R^n\times\R^n$ 
is separable, the product measures $\mu_n\times\mu_n$
converge weakly to $\mu\times\mu$, and thus
for any sequence $(\alpha_n)$,
$$
\liminf\limits_{n\rightarrow\infty} 
\E_{\alpha_n,\lambda}(\mu_n) 
\ge \int_{\R^n}\int_{R^n} 
\min\{|x-y|^{-\lambda}, t\} \, d\mu(x)d\mu(y)\,.
$$
By monotone convergence, taking $t\to\infty$ 
yields the lower bound inequality.

The upper bound inequality is achieved by a sequence of 
properly chosen dilations of $\mu$. 
Given a sequence $\alpha_n\to\infty$, set
$\beta_n= e^{\frac{1}{\sqrt{\alpha_n}}}$, and define
a sequence of Borel measures by
$$
\mu_n(A)=\mu(\beta_n A)\,,\qquad n\ge 1\,.
$$
Since $\beta_n\to 1$, clearly $\mu_n\rightharpoonup \mu$.
We estimate
\begin{align*}
\E_{\alpha_n,\lambda}(\mu_n)
&=\int_{\R^n}\int_{\R^n}|x-y|^{\alpha_n}+
|x-y|^{-\lambda}\, d\mu_n(x)d\mu_n(y)\\
&=\int_{\R^n}\int_{\R^n}
\Bigl(\frac{|x-y)|}{\beta_n}\Bigr)^\alpha 
+ \Bigl(\frac{|x-y)|}{\beta_n}\Bigr)^{-\lambda}\, d\mu(x)d\mu(y)\\
&\le e^{-\sqrt{\alpha_n}} +
e^{\frac{\lambda}{\sqrt{\alpha_n}}}
\E_{\infty,\lambda}(\mu)\,.
\end{align*}
We have used that $|x-y|\le 1$
on the support of $\mu$ to bound the first summand
of the integrand,
and inserted the definition of the limiting functional
into the second summand. The desired inequality follows
upon taking $n\to\infty$.
\end{proof}

The proof of Theorem~\ref{thm:minimizer}
requires a compactness argument. To this end one often resorts to an application of 
 Lions'
concentration compactness principle
for probability measures
(cf.~\cite[Section 4.3]{St}) which asserts that 
every sequence $(\mu_n)_{n}$ in $\P$ has a subsequence
$(\mu_{n_k})_{k}$ satisfying one of the three following 
alternatives: (i) tightness up to translation (ii) vanishing (mass sent to infinity) or (iii) dichotomy (splitting). 
A standard technique it to show that (ii) and (iii) can not happen, yielding (i) which, precisely, means: 
 There exists a 
sequence $(y_k)_{k}\subset\R^n$ such that for all
$\epsilon>0$ there exists $R>0$ with the property that
$
\mu_{n_k}(B_R(y_k)) \geq 1-\epsilon$  for all $k$.


However, in our simpler case we may just as well directly prove 
tightness magentato obtain compactness.
\begin{lemma} 
\label{lem:compact}
Let $\E_{\alpha,\lambda}$ be as in Eq.~\eqref{prob-P},
let $(\alpha_n)$ be a sequence with $\alpha_n\to \infty$,
and fix $\lambda\in (0,n)$.  Then every sequence $(\mu_n)$ in $\P$ such 
that $\E_{\alpha_n,\lambda}(\mu_n)$ 
is bounded has a subsequence that converges 
weakly, up to translations, to some $\mu\in \P$.
\end{lemma}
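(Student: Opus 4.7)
The plan is to exhibit translations $y_n\in\R^n$ so that the shifted measures $\tilde\mu_n(A):=\mu_n(A+y_n)$ form a tight family in $\P$, and then to invoke Prokhorov's theorem to extract a weakly convergent subsequence whose limit is necessarily a Borel probability measure. The attractive term $|x-y|^{\alpha_n}$ in the energy, which diverges rapidly outside $\{|x-y|\le 1\}$, is the mechanism that will force mass concentration.

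The first step is to turn the energy bound into quantitative concentration near the diagonal. Writing $M$ for the uniform energy bound, a Chebyshev estimate on the attractive part of the kernel gives, for any $R>1$,
\[
(\mu_n\times\mu_n)\bigl(\{|x-y|\ge R\}\bigr)\,\le\,\frac{1}{R^{\alpha_n}}\int\!\!\int |x-y|^{\alpha_n}\,d\mu_n(x)d\mu_n(y)\,\le\,\frac{M}{R^{\alpha_n}}\xrightarrow{n\to\infty}0.
\]
Fixing $R=2$ and combining Fubini's theorem with Markov's inequality in the $x$-variable then produces, for each $\eta>0$ and each sufficiently large $n$, a point $y_n^{(\eta)}\in\R^n$ with $\mu_n(\ball_R(y_n^{(\eta)}))\ge 1-\eta$.

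The second step is to upgrade this pointwise-in-$\eta$ statement to a single sequence $(y_n)$ that works uniformly. I would set $y_n:=y_n^{(1/4)}$ for all $n$ large enough, and choose $y_n$ arbitrarily for the finitely many remaining indices. For any $\eta\in(0,1/4)$ and all $n$ large, the two balls $\ball_R(y_n)$ and $\ball_R(y_n^{(\eta)})$ each carry more than half of the $\mu_n$-mass and hence cannot be disjoint; therefore $|y_n-y_n^{(\eta)}|\le 2R$, which yields
\[
\tilde\mu_n(\ball_{3R}(0))\,=\,\mu_n(\ball_{3R}(y_n))\,\ge\,\mu_n(\ball_R(y_n^{(\eta)}))\,\ge\,1-\eta.
\]
The finitely many initial indices are absorbed by individual tightness of each probability measure, possibly at the cost of enlarging the radius. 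This establishes tightness of $(\tilde\mu_n)$ in $\P$, and Prokhorov's theorem then delivers a subsequence converging weakly to some $\mu\in\P$.

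The principal subtlety is the passage from pointwise-in-$\eta$ existence of translations $y_n^{(\eta)}$ to a single working sequence $(y_n)$; the intersection-of-balls observation handles this cleanly via the fact that two balls whose $\mu_n$-masses sum to more than $1$ must overlap. All other ingredients---the Chebyshev-type bound, the Fubini--Markov step, and Prokhorov's theorem---are routine.
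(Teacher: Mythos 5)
Your proof is correct and follows essentially the same route as the paper's: both extract concentration from the boundedness of the attractive term via a Chebyshev-type estimate $(\mu_n\times\mu_n)(\{|x-y|\ge R\})\le M R^{-\alpha_n}\to 0$, translate to obtain a tight family, and conclude with Prokhorov's theorem. The only (minor) difference is in selecting the centers: the paper picks $y_n$ with $\mu_n(B_2(y_n))\to 1$ directly from the supremum over centers, whereas you first produce $\eta$-dependent centers and then unify them with the overlapping-balls observation --- both are valid and equivalent in substance.
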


\begin{proof} 
Let $(\mu_n)$ be such that
$$
\sup\limits_{n\in\mathbb{N}}\E_{\alpha_n,\lambda}(\mu_n)<\infty\,.
$$
Fix an $R>1$.  We have the lower bounds
\begin{align*}
\E_{\alpha_n,\lambda}(\mu_n)
& \ge \iint_{|x-y|\ge R} R^{\alpha_n} \, d\mu_n(x)d\mu_n(y)\\
&\ge R^{\alpha_n} \int_{\R^n} \mu_n\bigl(\R^n\setminus B_R(y)\bigr)
\, d\mu_n(y)\\
&\ge R^{\alpha_n} \bigl(1-\sup_{y\in\R^n}\mu_n(B_R(y)\bigr)\,.
\end{align*}
Since the left hand side is bounded by assumption 
while $\alpha_n\to\infty$, it follows that
$\sup_{y\in\R^n} \mu_n(B_R(y))\to 1$.
This establishes the first alternative of Lions'
concentration compactness principle.

Choose a sequence $(y_n)\subset\R^n$ such that 
$$\lim_{n\to\infty} \mu_n(B_2(y_n))= 1\,.
$$
Given $\eps>0$, let $N$ be so large that $\mu_n(B_2(y_n))\ge 1-\eps$ 
for all $n> N$.  Then choose $R$ so large that 
$\mu_n(B_R(y_n))\ge 1-\eps$ for $n=1,\dots,N$.
Taking taking $R\ge 2$ ensures that
$\mu_n(B_R(y_n))\ge \mu_n(B_2(y_n))\ge 1-\eps$
also for $n>N$. 

Let $(\tilde \mu_n)_n$ be the
sequence of translates of  $\mu_n$ defined by
$$
\tilde \mu_n(A) = \mu_n (y_n+A)\,,\quad n\ge 1
$$
for each Borel set $A\subset\R^n$. 
Since $(\tilde \mu_n)$ is tight. Prokhorov's theorem
yields a subsequence $(\tilde \mu_{n_k})_k$
that converges weakly in $\P$.
 \end{proof}

\begin{proof}[Proof of Theorem~\ref{thm:minimizer}]
Let $(\alpha_n)$ be a nonnegative
sequence with $\alpha_n\to\infty$,
and let $(\mu_n)$ be a sequence of measures such that 
each $\mu_n$ minimizes $\E_{\alpha_n,\lambda}$. 
We will prove that
$(\E_{\alpha_n,\lambda}(\mu_{\alpha_n}))_n$ 
is bounded, and then apply Lemma~\ref{lem:compact}.

Let $\nu$ be the uniform probability measure
on the ball of radius $\frac12$.
Since $\mu_n$ minimizes
$\E_{\alpha_n,\lambda}$ for each $n$, we have
\begin{align*}
\E_{\alpha_n,\lambda}(\mu_n) 
&\le \E_{\alpha_n,\lambda}(\nu) \\
&=\int_{\R^n} \int_{\R^n} |x-y|^\alpha + |x-y|^{-\lambda}\, d\nu(x)d\nu(y)\\
&\le 1+ \int_{\R^n}\int_{\R^n} |x-y|^{-\lambda}\, d\nu(x)d\nu(y)\\
&<\infty\,.
\end{align*}
In the last two inequalities, we have used that
the support of $\nu$ has diameter one, and that the kernel is locally 
integrable.  

By Lemma~\ref{lem:compact} there exists a subsequence 
$\mu_{n_k}$ that converges weakly up to translation,
to some measure $\mu\in \P$.
Since the functionals are translation
invariant, we may assume that the sequence of
minimizers itself that has a subsequence converging weakly
to $\mu$. By the properties of the Gamma-limit,
$\mu$ is a global minimizer of $\E_{\infty,\lambda}$. \end{proof}

\section{Characterization of Minimizers}\label{sec-characterization}

We recall some classical results from potential theory.  First, recall the  {\it $\lambda$-capacity} of a set $A\subset\R^n$ previously 
defined in  ({\ref{lam-cap}) as the reciprocal
of the minimum of the repulsive energy $I_\lambda$
over measures supported in $A$.}
If $A$ is a compact set of positive Lebesgue measure,
the $\lambda$-capacity is finite by the local integrability of
the Riesz-potential, and the
supremum is achieved by some measure $\mu\in\P$. Since
$I_\lambda$ is positive definite,
the minimizer is unique.

The next lemma relates the minimization problem
for $\E_{\infty,\lambda}$ to an isodiametric
capacity problem.

\begin{lemma} 
\label{lem:isodiametric}
Let $n\ge 1$, $\lambda\in (0,n)$.  Then
$$
\inf_{\nu\in \P} \E_{\infty,\lambda} (\nu)
= \Bigl( \sup_{A\subset\R^n}\bigl\{ C_\lambda(A)\ \big\vert\ 
\diam (A) \le 1\bigr\}\Bigr)^{-1}\,.
$$
Furthermore, the infimum on the left hand
side is attained for some measure $\mu$ with
$\diam(\supp\mu)=1$, and the supremum on the right
hand side is attained for some convex body $W\subset\R^n$
of constant width 1 containing the support of $\mu$.
Conversely, if $W$ maximizes $\lambda$-capacity among bodies of
constant width, then the equilibrium measure
on $W$ attains the minimum on the left hand side.
\end{lemma}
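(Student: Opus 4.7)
The plan is to unwind the definitions to obtain the variational identity, invoke Theorem~\ref{thm:minimizer} together with a scaling argument to secure a minimizer saturating the diameter constraint, and then enlarge its support to a convex body of constant width using the classical completion theorem for sets of constant diameter. For the identity, I would observe that for every $\nu\in\P$, $\E_{\infty,\lambda}(\nu)$ equals $I_\lambda(\nu)$ when $\diam(\supp\nu)\le 1$ and $+\infty$ otherwise. Restricting the outer infimum to such $\nu$ and letting $A=\supp\nu$ range over closed sets of diameter at most $1$ yields
$$\inf_{\nu\in\P}\E_{\infty,\lambda}(\nu) = \inf_{A:\,\diam(A)\le 1}\inf_{\substack{\nu\in\P\\\supp\nu\subset A}}I_\lambda(\nu) = \Bigl(\sup_{A:\,\diam(A)\le 1}C_\lambda(A)\Bigr)^{-1}.$$

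Next, Theorem~\ref{thm:minimizer} supplies a minimizer $\mu\in\P$. I would first rule out $d:=\diam(\supp\mu)<1$: the dilated measure $\tilde\mu(B):=\mu(dB)$ has $\diam(\supp\tilde\mu)=1$ and, by a change of variables, satisfies $I_\lambda(\tilde\mu)=d^{\lambda}I_\lambda(\mu)<I_\lambda(\mu)$, contradicting minimality. Then I would invoke the classical completion theorem (due to P\'al in $\R^2$, extended to $\R^n$ by Lebesgue and Eggleston): every bounded set of diameter~$1$ in $\R^n$ is contained in a compact convex set of constant width~$1$. Applied to $\supp\mu$, it produces $W\supset\supp\mu$ of constant width~$1$; monotonicity of $C_\lambda$ gives $C_\lambda(W)\ge 1/I_\lambda(\mu)$, and the variational identity of the first step forces equality, so $W$ attains the isodiametric supremum.

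For the converse, let $W$ maximize $C_\lambda$ among bodies of constant width~$1$, and let $\mu_W\in\P$ be its equilibrium measure, which is unique by positive definiteness of $I_\lambda$. Then $\supp\mu_W\subset W$, so $\diam(\supp\mu_W)\le 1$ and $\E_{\infty,\lambda}(\mu_W)=I_\lambda(\mu_W)=C_\lambda(W)^{-1}$. Combining the scaling identity $C_\lambda(rA)=r^{\lambda}C_\lambda(A)$ with completion shows that every set of diameter at most $1$ embeds, after rescaling, in a body of constant width~$1$ of no smaller capacity; hence $C_\lambda(W)=\sup\{C_\lambda(A):\diam(A)\le 1\}$, and $\mu_W$ attains $\inf_\P\E_{\infty,\lambda}$. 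The principal obstacle is the completion theorem, which I would cite as a black box: a Zorn-type argument easily yields a maximal set of diameter $1$ containing $\supp\mu$, but verifying that such maximal sets are automatically of constant width (equivalently, that every set of diameter $d$ admits a completion to a body of constant width~$d$) requires a genuinely geometric step that I would not reproduce. Everything else in the proof is routine rescaling and unwinding of definitions.
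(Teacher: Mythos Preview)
Your proposal is correct and follows essentially the same route as the paper: unwind the definitions to get the variational identity, invoke Theorem~\ref{thm:minimizer} and scaling to obtain a minimizer with $\diam(\supp\mu)=1$, and then apply the completion theorem (the paper cites~\cite{S}) together with monotonicity of $C_\lambda$ to pass to a body of constant width~1. One minor redundancy: in your converse argument the rescaling step is unnecessary, since the forward direction already shows that the supremum over all sets of diameter $\le 1$ is attained by a body of constant width~1, so any capacity-maximizer among such bodies automatically realizes the full supremum.
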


\begin{proof}
We split the minimization problem for
$\E_{\infty,\lambda}$ into two steps,
\begin{align*}
\inf_{\nu\in \P} \E_{\infty,\lambda} (\nu)
&=\inf_{A\subset\R^n}
\Bigl\{\inf_{\nu\in \P}\bigl\{ I_\lambda(\nu)\ \big\vert\ 
\supp\nu\subset A\bigr\}\ \Big\vert\ \diam(A)\le 1 \Bigr\}\\
&=\Bigl(\sup_{A\subset\R^n} 
\bigl\{ C_\lambda(A)\ \big\vert\ \diam(A)=1\bigr\}\Bigr)^{-1}\,.
\end{align*}
By Theorem~\ref{thm:minimizer}, 
the infimum on the left hand side is
attained for some measure $\mu\in\P$.
Clearly, $\diam(\supp \mu)=1$, since otherwise
$\mu$ could be rescaled to lower the value
of $\E_{\infty,\lambda}$.
Moreover, $A=\supp\mu$ achieves the
supremum on the right hand
side, and $\mu$ is the equilibrium measure for
the capacity $C_\lambda(A)$.
Since the capacity increases monotonically under inclusion,
we may replace $A$ by its convex hull.
The last claim follows since every closed convex set
of diameter~1 is contained in 
a convex body $W$ of constant width 1 (cf.  \cite{S}).
Since $C_\lambda(W)=C_\lambda(\supp \mu)$,
if follows that $\mu$ is the equilibrium measure also for
$W$.
\end{proof}

We can now appeal to known properties of equilibrium 
measures in classical potential theory. 
Given a probability measure $\mu$ on $\R^n$
and $\lambda\in (0,n)$, we define the corresponding
potential by
$$
\phi^\mu_\lambda(x)\, : =\, \int_{\R^n}  |x-y|^{-\lambda}\,d\mu(y)\,.  
$$
{
For any $x\in\R^n$, the integral is well-defined and 
strictly positive, though possibly infinite.
The function} has the following regularity property 
outside the support of~$\mu$. 

\begin{lemma}\label{lem:subharmonic} Let $\mu$ be a probability measure 
on $\R^n$.  On $\R^n\setminus \supp \mu$,
the potential $\phi_\mu^\lambda$ is smooth and 
$$
\begin{cases}
\text{strictly subharmonic}\,\qquad & \lambda\in (0,n-2)\,,\\
\text{harmonic}\,\qquad & \lambda= n-2\,,\\
\text{strictly superharmonic}\,\qquad & \lambda\in (n-2,n)\,.
\end{cases}
$$
\end{lemma}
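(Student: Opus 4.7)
The lemma is a direct consequence of differentiating twice under the integral sign and using the explicit formula for the Laplacian of a radial power. I would proceed in three steps.

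\medskip

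\emph{Step 1: Smoothness.} Fix a compact set $K\subset \R^n\setminus \supp\mu$ and set $d:=\dist(K,\supp\mu)>0$. For every multi-index $\beta$ the derivative $\partial_x^\beta |x-y|^{-\lambda}$ is a polynomial in the components of $x-y$ times a negative power of $|x-y|$, so on $K\times\supp\mu$ it is uniformly bounded by a constant depending only on $\beta$, $\lambda$ and $d$. Since $\mu$ is a probability measure, dominated convergence justifies differentiating under the integral to any order and yields
\[
\partial_x^\beta \phi_\mu^\lambda(x)\;=\;\int_{\R^n}\partial_x^\beta |x-y|^{-\lambda}\,d\mu(y)\qquad(x\in K).
\]
In particular $\phi_\mu^\lambda\in C^\infty(\R^n\setminus\supp\mu)$.

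\medskip

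\emph{Step 2: Laplacian of the kernel.} The standard formula for the radial Laplacian in $\R^n$, applied to $f(r)=r^{-\lambda}$, gives
\[
\Delta_x\,|x-y|^{-\lambda}\;=\;\lambda\bigl(\lambda-(n-2)\bigr)\,|x-y|^{-\lambda-2}\qquad(x\neq y).
\]
Combining this with Step 1, for every $x\notin\supp\mu$,
\[
\Delta \phi_\mu^\lambda(x)\;=\;\lambda\bigl(\lambda-(n-2)\bigr)\int_{\R^n}|x-y|^{-\lambda-2}\,d\mu(y).
\]

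\medskip

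\emph{Step 3: Trichotomy.} The integral on the right is finite, because $\dist(x,\supp\mu)>0$, and strictly positive, because $\mu$ is a probability measure; hence the sign of $\Delta\phi_\mu^\lambda(x)$ coincides with that of $\lambda(\lambda-(n-2))$. This is negative when $\lambda\in(0,n-2)$, zero when $\lambda=n-2$, and positive when $\lambda\in(n-2,n)$, which matches the three cases asserted in the lemma.

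\medskip

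There is no substantive obstacle here: the proof is essentially a computation, and the only point requiring real attention is the justification for interchanging $\Delta$ with the integral $\int d\mu(y)$. This is immediate from the uniform local bounds on $\partial_x^\beta |x-y|^{-\lambda}$ obtained in Step~1, so the strict positivity of the remaining integral in Step~3 is what produces the strictness of sub/super-harmonicity off the support.
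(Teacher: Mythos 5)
Your proof is correct and is essentially the paper's own argument: the paper disposes of the lemma by stating the identity $\Delta\phi^\mu_\lambda(x)=\lambda(\lambda+2-n)\int_{\R^n}|x-y|^{-\lambda-2}\,d\mu(y)$ for $x\notin\supp\mu$ as a ``direct computation,'' and you have merely supplied the routine justification (uniform bounds on the derivatives of the kernel at positive distance from $\supp\mu$, hence differentiation under the integral) together with the sign analysis. Nothing further is needed.
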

\begin{proof} By direct computation,
$$
\Delta\phi^\mu_\lambda(x)\, =\, \lambda(\lambda+2-n)\int_{\R^n} 
|x-y|^{-\lambda-2}\, d\mu(y) 
$$
away from the support of $\mu$.
\end{proof}

In the super-Newtonian regime,
the equilibrium measure has the following property.

\begin{lemma} {\rm \cite[p.137]{L}}
\label{lem:potential}
Let $\lambda\ge n-2$, and let $W\subset\R^n$ be a compact
set of positive capacity. If 
$\mu\in\P$ minimize $I_\lambda$ among
probability measures supported 
on $W$, then
\begin{align*}\phi^\mu_\lambda(x)=I_\lambda(\mu)&\ \ \ 
\text{approximately everywhere on }W\\
\phi^\mu_\lambda(x)\leq I_\lambda(\mu)&\ \ \ \text{throughout }\R^n
\end{align*}
where approximately everywhere means everywhere 
except on a set of capacity zero.
\end{lemma}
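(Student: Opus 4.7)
The proof of Lemma~\ref{lem:potential} splits into two classical steps: a first-order variational argument that pins down $\phi^\mu_\lambda$ on $W$, followed by an application of Frostman's maximum principle to promote the equality to a global upper bound on $\R^n$.

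For the variational step, I would exploit that the admissible class $\{\nu\in\P : \supp\nu\subset W\}$ is convex and that $I_\lambda$ is a positive-definite quadratic form on signed measures. For any competitor $\nu$ in this class, the segment $\mu_t := \mu + t(\nu-\mu)$ stays admissible for $t\in[0,1]$, and collecting the $t$ term in the inequality $I_\lambda(\mu_t)\ge I_\lambda(\mu)$ yields
$$
\int_{\R^n}\phi^\mu_\lambda\,d\nu \;\ge\; I_\lambda(\mu).
$$
Specializing $\nu$ to be the equilibrium measure of a compact subset of $W$ of positive $\lambda$-capacity, one deduces that $\phi^\mu_\lambda(x)\ge I_\lambda(\mu)$ for all $x\in W$ outside a set of capacity zero: if instead the set $E_\epsilon=\{x\in W:\phi^\mu_\lambda(x)<I_\lambda(\mu)-\epsilon\}$ had positive capacity for some $\epsilon>0$, its equilibrium measure would provide an admissible $\nu$ violating the displayed bound. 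Combined with the identity $\int\phi^\mu_\lambda\,d\mu=I_\lambda(\mu)$, which together with the one-sided lower bound forces $\phi^\mu_\lambda=I_\lambda(\mu)$ $\mu$-almost everywhere, this produces equality approximately everywhere on $W$.

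For the global upper bound I would invoke the classical maximum principle for Riesz potentials: for $\lambda\ge n-2$, if $\phi^\mu_\lambda\le M$ approximately everywhere on $\supp\mu$, then $\phi^\mu_\lambda\le M$ throughout $\R^n$. In the Newtonian case $\lambda=n-2$, this is immediate from Lemma~\ref{lem:subharmonic}: $\phi^\mu_\lambda$ is harmonic on $\R^n\setminus\supp\mu$ and tends to $0$ at infinity, so the ordinary maximum principle applies once the exceptional capacity-zero set is absorbed via the lower semicontinuity of the potential.

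The main obstacle lies in the super-Newtonian range $\lambda\in(n-2,n)$, where $\phi^\mu_\lambda$ is only superharmonic off $\supp\mu$ and may be singular on a Lebesgue-nonnull subset of $\supp\mu$, so the classical maximum principle does not apply directly. Frostman's argument bypasses this by coupling lower semicontinuity with the Riesz domination principle, which itself rests on the Riesz composition identity factoring the kernel $|x|^{-\lambda}$ as a convolution of two lower-order Riesz kernels---a factorization available precisely when $\lambda>n-2$. Since the statement is quoted from Landkof's treatise, I would simply refer to that source rather than reproduce the technicalities of the domination principle.
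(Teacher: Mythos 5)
The paper does not actually prove this lemma---it is quoted verbatim from Landkof---so there is no in-paper argument to compare against; your sketch is the standard Frostman argument and is essentially sound, with the hard analytic step (the maximum principle for $\lambda\ge n-2$) deferred to the same source the paper cites. Two points deserve correction. First, a logical slip in your second paragraph: the $\mu$-almost-everywhere equality $\phi^\mu_\lambda=I_\lambda(\mu)$ does \emph{not} by itself ``produce equality approximately everywhere on $W$,'' since $W\setminus\supp\mu$ may have positive capacity. The correct route is: the variational step gives $\phi^\mu_\lambda\ge I_\lambda(\mu)$ quasi-everywhere on $W$; the $\mu$-a.e.\ equality plus lower semicontinuity gives $\phi^\mu_\lambda\le I_\lambda(\mu)$ on $\supp\mu$; the maximum principle then upgrades this to $\phi^\mu_\lambda\le I_\lambda(\mu)$ on all of $\R^n$; and only \emph{then} do the two global bounds combine to give equality approximately everywhere on $W$. (You should also note that extracting an admissible competitor from $E_\epsilon$ requires passing to a compact subset of positive capacity, via inner capacitability.) Second, your parenthetical that the Riesz composition identity is ``available precisely when $\lambda>n-2$'' is inaccurate: the composition formula for Riesz kernels holds for all admissible orders; what is special about $\lambda\ge n-2$ is that the kernel $|x|^{-\lambda}=|x|^{\alpha-n}$ has order $\alpha=n-\lambda\le 2$, which is exactly the range in which Frostman's maximum principle (and the domination principle) holds---and exactly why the lemma's hypothesis excludes the sub-Newtonian range.
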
 

We are ready for the proof of Theorem~\ref{thm:support}.

\begin{proof} 
Let $\mu$ be a minimizer of $\mathcal{E}_{\infty,\lambda}$. 
By Lemma~\ref{lem:isodiametric},
$\mu$ is the equilibrium measure that achieves
the $\lambda$-capacity of some convex body $W$ of constant width 1.  
When $\lambda\leq n-2$, 
classical results of potential theory (cf. \cite[p.162]{L}) ensure
that $\supp\mu\subset \partial W$. 
This proves the claim in the {sub-Newtonian} regime.

Let now $\lambda\geq n-2$, and $p\in\partial W$.
Since $W$ is a convex body, every neighborhood of
$p$ intersects the interior of $W$ in a set of positive volume
({and} hence positive capacity). Again by classical results
of potential theory (cf. \cite[p.164]{L}), 
$p$ lies in the support of~$\mu$. Therefore
$\partial W \subset \supp\mu$. 
{Together with the result for $\lambda\le n-2$,
this completes the proof in the Newtonian case}.

{
For $\lambda>n-2$ Lemma~\ref{lem:subharmonic}
yields that the potential} $\phi_\mu^\lambda$ is strictly subharmonic 
outside the support of~$\mu$. By the {strong}
maximum
principle, $\phi_\mu^\lambda$ is non-constant on every
non-empty open set $U$ with $\mu(U)=0$.  On the other hand,
$\phi_\mu^\lambda$ is constant on the interior of
$W$ by Lemma~\ref{lem:potential}. Therefore
$\mu(U)>0$ for every non-empty open subset of the interior of $W$,
{and we conclude that $W\subset \supp \mu$.
This proves the claim in the super-Newtonian regime.}
 \end{proof} 

\section{Capacity Estimates}\label{sec-cap-est}

We close with some simple capacity
estimates 
which will prove Theorem~\ref{thm:sym-break}.


\begin{lemma}
\label{lem:capacity-bound} 
Let $n\ge 1$, $\lambda\in (0,n)$. Then
$$
\sup_{A\subset\R^n} 
\bigl\{ C_\lambda(A)\ \big\vert\ \diam(A)=1\bigr\} < 1\,.
$$
\end{lemma}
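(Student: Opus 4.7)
The plan is to establish a uniform lower bound $I_\lambda(\nu) \ge 1 + c(n,\lambda) > 1$ for every $\nu \in \P$ with $\diam(\supp \nu) \le 1$. Since $C_\lambda(A)$ is the reciprocal of the infimum of $I_\lambda$ over probability measures supported in $A$, such a bound immediately yields $C_\lambda(A) \le (1+c)^{-1} < 1$, uniformly in $A$ with $\diam(A) \le 1$, which is precisely the claim.

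The starting observation is the trivial inequality $I_\lambda(\nu) \ge 1$, obtained from $|x-y| \le 1$ on $\supp(\nu) \times \supp(\nu)$ and hence $|x-y|^{-\lambda} \ge 1$. To upgrade this to a strict inequality, I would fix a scale $t \in (0,1)$ and split the integral:
\[
I_\lambda(\nu) \ge t^{-\lambda} P_t + (1 - P_t) = 1 + (t^{-\lambda} - 1)\, P_t,
\]
where $P_t := (\nu \times \nu)\bigl(\{(x,y) : |x-y| \le t\}\bigr)$. The problem then reduces to bounding $P_t$ from below by a positive constant depending only on $n$ and $t$.

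For this lower bound I would use a pigeonhole argument. Since $\supp \nu$ has diameter at most $1$, a standard grid of half-open cubes of side length $t/\sqrt{n}$ covers it by at most $N = N(n,t) \le (C\sqrt{n}/t)^n$ cubes; letting $E_i$ denote their intersections with $\supp\nu$ produces a finite Borel partition of $\supp\nu$ into sets of diameter $\le t$. The sets $E_i \times E_i$ are pairwise disjoint subsets of $\{|x-y| \le t\}$, so Cauchy--Schwarz gives
\[
P_t \ge \sum_i \nu(E_i)^2 \ge \frac{1}{N}\Bigl(\sum_i \nu(E_i)\Bigr)^2 = \frac{1}{N}.
\]
Combining, $I_\lambda(\nu) \ge 1 + (t^{-\lambda}-1)/N$, which is strictly greater than $1$ for any fixed $t \in (0,1)$, and the lemma follows. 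The argument is essentially elementary; I do not anticipate a genuine obstacle beyond the covering count, which is standard for bounded sets in $\R^n$. The only tension is that as $\lambda \downarrow 0$ one has $t^{-\lambda}-1 \to 0$, which is consistent with the bound degrading in this limit but remaining strictly less than $1$.
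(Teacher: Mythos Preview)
Your proof is correct, and it takes a genuinely different route from the paper's.

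The paper first invokes Lemma~\ref{lem:isodiametric} (which in turn rests on the existence result Theorem~\ref{thm:minimizer}) to obtain a maximizing set $A$ with equilibrium measure $\mu$, and then observes that
\[
\mathcal{E}_{\infty,\lambda}(\mu)-1 \ \ge\ \int\!\Bigl(\int_{B_{1/2}(x)}\bigl(|x-y|^{-\lambda}-1\bigr)\,d\mu(y)\Bigr)d\mu(x) \ >\ 0,
\]
because $\mu(B_{1/2}(x))>0$ for $x\in\supp\mu$. This is a soft argument: it yields the strict inequality but no explicit constant, and it relies on the existence machinery built earlier in the paper.

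Your argument, by contrast, is self-contained and quantitative. The pigeonhole-plus-Cauchy--Schwarz covering step gives a uniform bound $I_\lambda(\nu)\ge 1+(t^{-\lambda}-1)/N(n,t)$ valid for \emph{every} admissible $\nu$, with no appeal to the existence of a maximizer. This buys you an explicit (if crude) rate and complete independence from the rest of the paper's infrastructure. The paper's proof is marginally shorter once that infrastructure is already in place, but yours is the more robust and informative argument.
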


\begin{proof} 
By Lemma~\ref{lem:isodiametric}, there is a set $A\subset\R^n$
that maximizes the capacity $C_\lambda$
among sets of diameter~1. Let
$\mu$ be the equilibrium measure on $A$ that achieves 
the capacity.  We estimate
$$
\mathcal{E}_{\infty,\lambda}(\mu) -1 
\ge \int
\Bigl(\int_{B_{\frac12}(x)}
(|x-y|^{-\lambda}-1)\, d\mu(y)\Bigr)d\mu(x)>0\,,
$$
where the first inequality holds since
the integrand is nonnegative for every pair of
points $x,y\in A$, and the second inequality
uses that $\mu (B_{\frac12}(x))>0$ for 
$x$ in the support of $\mu$.
By Lemma~\ref{lem:isodiametric}, 
$C_\lambda (A)= (\mathcal{E}_{\infty,\lambda}(\mu))^{-1}<1$, as claimed.
\end{proof}

We next consider the capacity of balls in high dimensions.

\begin{lemma}
\label{lem:ball-cap}
 For every $\lambda>0$
$$\lim\limits_{n\rightarrow\infty} C_\lambda(B^{(n)}_{1/2})=2^{-\frac{\lambda}{2}}.$$
\end{lemma}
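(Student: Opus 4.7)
The strategy is to combine the classical description of the equilibrium measure on a ball with concentration of measure on high-dimensional spheres. For all $n\ge\lambda+2$, i.e.\ $\lambda\le n-2$, the potential-theoretic result cited in the introduction (cf.~\cite[p.~163]{L}) says that the equilibrium measure on $B^{(n)}_{1/2}$ is the normalized surface measure $\sigma_n$ on $\partial B^{(n)}_{1/2}$.  Hence $C_\lambda(B^{(n)}_{1/2})^{-1}=I_\lambda(\sigma_n)$, and it suffices to show $I_\lambda(\sigma_n)\to 2^{\lambda/2}$.

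By rotational invariance, fix $x\in\partial B^{(n)}_{1/2}$ and parametrize $y\in\partial B^{(n)}_{1/2}$ by the angle $\theta\in[0,\pi]$ it makes with $x$ viewed from the origin. A direct computation gives $|x-y|=\sin(\theta/2)$, and the induced distribution of $\theta$ has density proportional to $\sin^{n-2}\theta$. Thus
\[
I_\lambda(\sigma_n)\;=\;\frac{\int_0^\pi \sin(\theta/2)^{-\lambda}\,\sin^{n-2}\theta\, d\theta}{\int_0^\pi \sin^{n-2}\theta\, d\theta}.
\]
The heuristic is clear: the weight $\sin^{n-2}\theta$ concentrates at $\theta=\pi/2$ as a Gaussian of variance $\sim 1/n$ (Laplace's method), and at $\theta=\pi/2$ the integrand equals $\sin(\pi/4)^{-\lambda}=2^{\lambda/2}$.

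To make this precise, I would split the integral at some fixed small $\delta\in(0,\pi/2)$. On $[\delta,\pi]$ the factor $\sin(\theta/2)^{-\lambda}$ is bounded and continuous, so a standard application of Laplace's method (using $\int_0^\pi \sin^{n-2}\theta\,d\theta\sim \sqrt{2\pi/n}$) yields convergence of this portion to $2^{\lambda/2}$. On $[0,\delta]$, the estimates $\sin(\theta/2)^{-\lambda}\le C\theta^{-\lambda}$ and $\sin^{n-2}\theta\le\theta^{n-2}$ give
\[
\int_0^\delta \sin(\theta/2)^{-\lambda}\,\sin^{n-2}\theta\, d\theta\;\le\;\frac{C\,\delta^{n-1-\lambda}}{n-1-\lambda},
\]
which after normalization by $(\int_0^\pi\sin^{n-2}\theta\,d\theta)^{-1}\sim\sqrt{n}$ decays exponentially for any fixed $\delta<1$. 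The main obstacle is precisely the singularity of the Riesz kernel near $\theta=0$; however, the vanishing of $\sin^{n-2}\theta$ to order $n-2$ at the origin dominates the singularity as soon as $n>\lambda+1$, which is comfortably inside the regime $n\ge\lambda+2$ where $\sigma_n$ is the equilibrium measure. Combining the two pieces and sending $n\to\infty$ first (for fixed $\delta$) then $\delta\to 0$ yields $I_\lambda(\sigma_n)\to 2^{\lambda/2}$, as desired.
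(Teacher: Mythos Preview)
Your proof is correct and follows essentially the same route as the paper, which simply cites the closed-form expression for $C_\lambda(B^{(n)}_{1/2})$ from~\cite[p.~163]{L} and applies Stirling's approximation. Your Laplace-method argument on the integral representation of $I_\lambda(\sigma_n)$ is a more hands-on version of the same asymptotic computation, since the integrals you write down evaluate in closed form to ratios of Gamma functions, and Stirling's formula for those ratios is precisely Laplace's method applied to $\sin^{n-2}\theta$.
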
 
\begin{proof}
 This follows by direct computation of $C_\lambda(B^{(n)}_{1/2})$ (cf. \cite[p.163]{L}) and Stirling's approximation.
\end{proof}

Finally, we construct sets of larger capacity in high dimensions.

\begin{lemma}
\label{lem:large-n}
For every $\lambda>0$,
$$
\lim_{n\to\infty}
\left(
\sup \left\{ C_\lambda(A) \ \big\vert\ 
A\subset \R^n, \diam(A)\le 1\right\}\right)  = 1\,.
$$
\end{lemma}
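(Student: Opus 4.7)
The plan is to construct, for each large $n$, a set $A_n\subset\R^n$ of diameter at most $1$ whose $\lambda$-capacity tends to $1$; together with Lemma~\ref{lem:capacity-bound} this will yield the claim. The obstacle in a ball is geometric: typical pairs in $B^{(n)}_{1/2}$ lie at distance close to $1/\sqrt{2}$ rather than $1$, which drives $|x-y|^{-\lambda}$ well above $1$. Since a regular $n$-simplex has $n+1$ vertices in $\R^n$ at pairwise distance equal to its diameter, I would build $A_n$ as a small thickening of the simplex vertices, which ensures all pairwise distances between distinct ``clumps'' of mass are close to $1$.

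Concretely, I would let $v_1,\dots,v_{n+1}\in\R^n$ be the vertices of a regular simplex of side length $1$, and for a parameter $\delta>0$ (to be chosen), take $A_n^\delta$ to be the dilate of $\bigcup_i B_\delta(v_i)$ by the factor $1/(1+2\delta)$; this is a disjoint union of $n+1$ balls of radius $r:=\delta/(1+2\delta)$ whose centers sit at pairwise distance $1/(1+2\delta)$, so $\diam(A_n^\delta)\le 1$. On $A_n^\delta$ I would test with the probability measure $\mu_n := \frac{1}{n+1}\sum_i \mu_i$, where $\mu_i$ is uniform on the $i$-th ball, and bound its Riesz energy. For $i\ne j$, points in distinct balls are separated by at least $(1-2\delta)/(1+2\delta)$, bounding each cross-term by $((1-2\delta)/(1+2\delta))^{-\lambda}$. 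For the self-term, the potential of $\mu_i$ is radially decreasing about the centre of its ball (by Riesz rearrangement), and a polar-coordinate computation at the centre yields $\phi^{\mu_i}_\lambda(\mathrm{centre}) = \frac{n}{n-\lambda}r^{-\lambda}$, so $I_\lambda(\mu_i,\mu_i)\le \frac{n}{n-\lambda}r^{-\lambda}$. Combining,
\[
I_\lambda(\mu_n)\,\le\,\frac{1}{n+1}\cdot\frac{n}{n-\lambda}\,r^{-\lambda}\,+\,\frac{n}{n+1}\Bigl(\frac{1-2\delta}{1+2\delta}\Bigr)^{-\lambda}.
\]
Choosing $\delta=\delta_n\to 0$ with $\delta_n^{-\lambda}/n\to 0$ (for instance $\delta_n=n^{-1/(2\lambda)}$) sends the first summand to $0$ and the second to $1$, whence $C_\lambda(A_n^{\delta_n})\ge 1/I_\lambda(\mu_n)\to 1$; combined with the upper bound from Lemma~\ref{lem:capacity-bound}, the limit follows.

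The main obstacle is that concentrating unit mass in a ball of radius $r$ costs self-energy of order $r^{-\lambda}$, diverging as $r\to 0$. This is defeated by two features of high dimension: spreading the mass across $n+1$ simplex vertices weights the diagonal contribution by $(n+1)^{-1}$, while the centered potential of a ball in $\R^n$ has the clean dimension-independent bound $\frac{n}{n-\lambda}r^{-\lambda}$. The delicate point is balancing $r_n$ so that the amortised self-energy still shrinks while the off-diagonal distance remains near $1$; any scaling $r_n\gg n^{-1/\lambda}$ accomplishes both.
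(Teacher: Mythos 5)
Your proposal is correct, and it takes a genuinely different route from the paper. The paper works inside the sphere of radius $\tfrac12\sqrt2$: it takes capacity-maximizing subsets $A_n$ of spherical caps of diameter $1$, embeds $A_m\times\{0\}$ and $\{0\}\times A_n$ orthogonally in $\R^{m+n}$ so that all cross-distances equal exactly $1$, optimizes over the mixing parameter to show that $\bigl(\mathcal{E}_{\infty,\lambda}(\mu_n)-1\bigr)^{-1}$ is superadditive, and invokes Fekete's lemma. Your construction instead places mass on $n+1$ thickened vertices of a regular unit simplex and estimates the energy directly: the diagonal contribution is amortized by the factor $(n+1)^{-1}$ against the self-energy bound $\tfrac{n}{n-\lambda}r^{-\lambda}$ (valid once $n>\lambda$, which is automatic in the limit; the bound itself follows either from your symmetric-decreasing-convolution argument or simply from the bathtub principle, since among sets of volume $|B_r|$ the integral of $|x-y|^{-\lambda}$ is maximized by the ball centered at $x$), while the off-diagonal distances are within $O(\delta)$ of $1$. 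The trade-off: the paper's superadditivity argument yields the sharper rate $\mathcal{E}_{\infty,\lambda}(\mu_n)-1=O(1/n)$, whereas optimizing your $\delta_n$ gives only $O(n^{-1/(\lambda+1)})$; on the other hand, your argument is fully explicit and elementary, avoids Fekete's lemma and the existence of equilibrium measures on spherical caps, and connects naturally to the simplex-vertex maximizers of the variance problem cited in the paper (\cite{LM}). Both constructions produce near-maximizers supported on low-dimensional subsets, consistent with the paper's closing remark. The only cosmetic points worth tightening are the justification of the radial monotonicity of the ball potential (cite the standard fact that the convolution of symmetric-decreasing functions is symmetric-decreasing, or use the bathtub bound) and an explicit remark that Lemma~\ref{lem:capacity-bound} supplies the matching upper bound $C_\lambda(A)<1$, which you do invoke.
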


\begin{proof}
Since $C_\lambda(A)<1$ for all $n$ by Lemma~\ref{lem:capacity-bound},
it suffices to establish the corresponding
lower bound on the capacity.

We will construct a family of subsets $(A_n)_n$ of diameter~1
in the sphere of radius $\frac12\sqrt{2}$
in $\R^n$ that achieves this limit. For each $n>\lambda+1$, 
the spherical cap of diameter~1 in this sphere
has positive $\lambda$-capacity. Let $A_n$ be a set of maximal
capacity among such subsets,
and let $\mu_n$ be the equilibrium measure on $A_n$
that attains the capacity.

For $m,n>\lambda+1$, consider a convex combination
$$
\mu=(1-t)(\mu_m\otimes \delta) + t (\delta \otimes \mu_n)
$$
on $\R^{m+n}$, where $\delta$ denotes the unit mass at $0$ in
$\R^n$ and $\R^m$, respectively, 
and $t\in(0,1)$ will be chosen below.
By definition, $\mu$ is supported on
$(A_m\times \{0\}) \cup (\{0\}\times A_n)$,
which lies in the sphere of radius $\frac12 \sqrt{2}$
in $\R^{m+n}$ and has diameter 1. We estimate
\begin{align*}
\mathcal{E}_{\infty,\lambda}(\mu_{m+n})-1
&\le \mathcal{E}_{\infty,\lambda}(\mu)-1\\
&=\int\int (|x-y|^{-\lambda}-1)\, d\mu(x)d\mu(y)\\
&= (1-t)^2 (\mathcal{E}_{\infty,\lambda}(\mu_m)-1)
+ t^2 ( \mathcal{E}_{\infty,\lambda}(\mu_n)-1)\,;
\end{align*}
the mixed terms vanish because 
$|x-y|=1$ whenever $x\in A_m\times\{0\}$ and $y\in \{0\}\times A_n$. 
Minimization over $t$ yields
\begin{align*}
\mathcal{E}_{\infty,\lambda}(\mu_{m+n}) -1 
&\le \frac{(\mathcal{E}_{\infty,\lambda}(\mu_m)-1)
(\mathcal{E}_{\infty,\lambda}(\mu_n)-1)}
{\mathcal{E}_{\infty,\lambda}(\mu_m) + \mathcal{E}_{\infty,\lambda}(\mu_n)
-2}\,.
\end{align*}
Since $\mathcal{E}_{\infty,\lambda}(\mu_n)>1$ for all $n$
by Lemma~\ref{lem:capacity-bound}, we
can pass to reciprocals and conclude that
$\left(\mathcal{E}_{\infty,\lambda}(\mu_{n})-1\right)^{-1}$
is superadditive in $n$. 
By Fekete's superadditivity lemma
$$
\lim_{n\to\infty} \frac1n \left(\mathcal{E}_{\infty,\lambda}(\mu_{n})-1\right)^{-1}
= \sup_n \frac1n \left(\mathcal{E}_{\infty,\lambda}
(\mu_{n})-1\right)^{-1} >0\,.
$$
It follows that
$\displaystyle{\lim_{n\to\infty} C_\lambda(A_n) = \left(
\lim_{n\to\infty} \mathcal{E}_{\infty,\lambda}(\mu_n)\right)^{-1}
= 1}$.
\end{proof}

The proof of Theorem~\ref{thm:sym-break} is an immediate corollary of Lemma~\ref{lem:ball-cap} and Lemma~\ref{lem:large-n} 
since $2^{-\frac{\lambda}{2}}<1$ for every $\lambda>0$. Note that 
the near-maximizers constructed in the proof of 
Lemma~\ref{lem:large-n} have dimension much below $n$, but
this need  not be true for actual maximizers.

\subsection*{Acknowledgments.} We  thank the anonymous referee for comments which helped improve the paper. This work emerged from an 
NSERC  URSA (Undergraduate Research Summer Award) project of 
Elias Hess-Childs supervised by AB and RC. 
AB and RC also acknowledge the support of NSERC 
through its Discovery Grants program. The authors 
thank Ihsan Topaloglu 
for many discussions and comments. We also 
thank Theodore Kolokolnikov for providing us with 
his 3D particle simulation code (cf. Fig~\ref{fig-3D}).

\bibliographystyle{amsalpha}

\end{document}